\numberwithin{equation}{section}
\newtheorem{dummy}{dummy}[section]
\newtheorem{theorem}[dummy]{Theorem}
\newtheorem{corollary}[dummy]{Corollary}
\newtheorem{lemma}[dummy]{Lemma}
\newtheorem{proposition}[dummy]{Proposition}
\newtheorem{remark}[dummy]{Remark}
\newtheorem{example}[dummy]{Example}
\newtheorem{conjecture}[dummy]{Conjecture}
\newtheorem{definition}[dummy]{Definition}
\def\A{\mathbb A}
\def\C{\mathbb C}
\def\F{\mathbb F}
\def\L{\mathbb L}
\def\P{\mathbb P}
\def\Q{\mathbb Q}
\def\R{\mathbb R}
\def\S{\mathbb S}
\def\Z{\mathbb Z}
\def\AA{\mathcal A}
\def\HH{\mathcal H}
\def\LL{\mathcal L}
\def\MM{\mathcal M}
\def\OO{\mathcal O}
\def\TT{\mathcal T}
\def\wt{\widetilde}
\def\={\;=\;}
\def\bal{\begin{aligned}}
\def\eal{\end{aligned}}
\def\be{\begin{equation}\label}
\def\ee{\end{equation}}
\newcommand \zhenya {\color{blue}}
\newcommand{\Sym}[1] {{#1}^{(2)}}
\newcommand{\Hilb}[1] {{#1}^{[2]}}
\title[Fano variety and rationality for a cubic]{The Fano variety of lines and \\rationality problem for a cubic hypersurface}
\author{Sergey Galkin, Evgeny Shinder}
\begin{document}
\begin{abstract}
We find a relation between a cubic hypersurface $Y$ and its Fano variety of lines $F(Y)$
in the Grothendieck ring of varieties. We use this relation to study the Hodge structure of $F(Y)$.
Finally we propose a criterion for rationality of a smooth cubic hypersurface
in terms of its variety of lines.

In particular, we show that if the class of the affine line 
is not a zero-divisor in the Grothendieck ring,
then the variety of lines on a smooth rational cubic fourfold
is birational to a Hilbert scheme of two points on a $K3$ surface.
\end{abstract}
\maketitle
\tableofcontents 
\section{Introduction}

Let $Y \subset \P^{d+1}$ be a 
cubic hypersurface
over a field $k$.
The Fano variety $F(Y)$ of lines on $Y$ is defined as the closed subvariety 
of the Grassmannian $Gr(1, \P^{d+1}) = Gr(2, d+2)$ consisting of lines
$L \subset Y$. See \cite{BV}, \cite{AK} for details and Section \ref{subsec-fano-def}
for a short summary on the Fano variety of lines on a cubic.

\medskip

For the purpose of this Introduction assume that $Y$ is smooth, in which case $F(Y)$ is smooth
projective of dimension $2d-4$, and connected for $d \ge 3$. Consider the low-dimensional cases:

\begin{itemize}

\item $d=2$: $F(Y)$ consists of $27$ isolated points. 
This has been discovered in the correspondence between Cayley and Salmon 
and has been published in 1849 \cite{Cay, Sal}.

\item $d=3$: $F(Y)$ is a surface of general type
which has been introduced 
by Fano \cite{F}, and then studied by
Bombieri and Swinnerton-Dyer \cite{BS} in their
proof of the Weil conjectures for a smooth cubic threefold 
defined over a finite field and
by Clemens and Griffiths \cite{CG} in their proof
of irrationality of a smooth complex cubic threefold.

\item $d=4$: $F(Y)$ is a holomorphic symplectic fourfold
\cite{BD}. For several types of cubic fourfolds $F(Y)$ is
isomorphic to a Hilbert scheme of two points on a $K3$ surface 
\cite{BD}, \cite{Has2}.
\end{itemize}

\medskip

In this paper we study the geometry of $F(Y)$ in terms of the geometry of $Y$
and discuss applications to rationality of $Y$.
In the most concise form the relation between the geometry of the Fano variety $F(Y)$
and the cubic $Y$, which we call the $Y$-$F(Y)$ relation looks like:
\[
[\Hilb{Y}] = [\P^d][Y] + \L^2[F(Y)]. 
\]

Here $\Hilb{Y}$ is the Hilbert scheme of length two subschemes on $Y$.
The equality holds in the Grothendieck ring of varieties, and encodes the basic
fact that a line $L$ intersecting the cubic $Y$ in two points determines the third
intersection point unless $L \subset Y$.
See Theorems \ref{main-thm1}, \ref{main-thm2} for different ways of expressing the
$Y$-$F(Y)$ relation.

Different kinds of invariants of the Fano variety may be computed using the $Y$-$F(Y)$
relation.
For example we can easily compute the number of lines on real and complex smooth or singular
cubic surfaces (see Examples \ref{ex-duVal}, \ref{ex-real}).
On the other hand we can compute the Hodge structure of $H^*(F(Y), \Q)$ for a
smooth complex cubic $Y$ of an arbitrary dimension $d$.
It turns out that the Hodge structure of $F(Y)$ is essentially 
the symmetric square of the Hodge structure of $Y$  (Theorem \ref{thm-hodge}).

\medskip

A central question in studying cubics is that of rationality of a smooth cubic $d$-fold.
This question is highly non-trivial in dimension $d \ge 3$ (for $d\ge 2$ if the base field 
is not algebraically closed); at the moment the answer is not known for $d \ge 4$.

As already mentioned above Clemens and Griffiths used delicate analysis of the Fano variety
of lines in relation to the intermediate Jacobian of the cubic threefold to prove irrationality of all smooth
complex cubic threefolds \cite{CG}. 
Murre uses similar techniques to prove irrationality of cubic threefolds in characteristic $p \ne 2$
\cite{Mur72, Mur73, Mur74}.

In dimension $4$ the situation is much more complicated. 
Let us give a brief overview on rationality of smooth cubic fourfolds over $k=\C$.
First of all, there are examples of smooth rational cubic fourfolds: the simplest
ones are cubic fourfolds containing two disjoint $2$-planes 
and Pfaffian cubic fourfolds.
These and other classes of rational cubic fourfolds have been studied by
Morin \cite{Mor}, Fano \cite{F43}, Tregub \cite{Tr1}, \cite{Tr2}, Beauville-Donagi \cite{BD} and Hassett \cite{Has1}.

Nevertheless a very general cubic fourfold is expected to be irrational.
More precisely, according to a conjecture made by Iskovskih in the early 1980s (see \cite{Tr1}),
the algebraic cycles inside $H^4(Y, \Q)$ of a rational cubic fourfold
form a lattice of rank at least two.
It is known that such cubic fourfolds form a countable union of divisors in the
moduli space. Thus for a very general cubic fourfold algebraic classes in $H^4(Y, \Q)$ are
multiples of $h^2$, where $h \in H^2(Y, \Q)$ is the class of the hyperplane section \cite{Zar, Has2}

In light of this discussion it is quite remarkable that no irrational cubic fourfold is known at the moment.
Kulikov deduced irrationality of a general cubic fourfold from a certain conjectural indecomposability of
Hodge structure of surfaces \cite{Kul}; the latter version of indecomposability however was recently shown
to be false \cite{ABB}.

Hassett called cubic fourfolds with an extra algebraic class in $H^4(Y, \Q)$ \emph{special},
and studied them in detail, giving
complete classification of special cubic fourfolds into a countable union of divisors inside
the moduli space of all cubic fourfolds \cite{Has2}.
Hassett also classified those special cubic fourfolds $Y$ that have an associated $K3$ surface $S$.
This basically means that the primitive cohomology lattices of $Y$ and $S$ are isomorphic
(see Definition \ref{def-assK3} for details).

It is expected that rational smooth cubic fourfolds are not only special, but also
have associated $K3$ surfaces. 

Kuznetsov \cite{Kuz10} gave a conjectural criterion for rationality of a smooth cubic
fourfold based on derived categories. His condition also involves existence of a $K3$
surface $S$ associated to $Y$: the derived category of coherent sheaves on $S$
must be embedded into the derived category of coherent sheaves on $Y$.
Furthemore, Addington and Thomas showed that this criterion is generically 
equivalent to the Hodge-theoretic criterion of Hassett \cite{AT}.

A different but related conjectural necessary condition for rationality of cubic
fourfolds is given by Shen: according to \cite[Conjecture 1.6]{Sh}
a smooth rational cubic fourfold $Y$, $F(Y)$ must have a certain algebraic class in the 
middle cohomology $H^4(F(Y), \Z)$.


\medskip

In any even dimension $d=2r$ there exist smooth rational cubics.
To construct one we start with an $r$-dimensional subvariety $W \subset \P^{2r+1}$ with \emph{one apparent double point}.
This means that through a general point $p \in \P^{2r+1}$ there is a unique secant line $L_p$ to $W$, 
i.e. a line which intersects $W$ in two points.
A simple geometric construction which has been used by Morin \cite{Mor} and Fano \cite{F43}
shows that any cubic $Y$ containing such a $W$ is rational \cite[Prop.9]{Rus}.
The case of $2r$-dimensional cubics containing two disjoint $r$-planes is
a particular case of this construction.

Smooth connected varieties with one apparent double point exist for any dimension $r$ \cite{Bab31, Ed32, Rus}, and
such varieties are classified in small dimensions. For example, for $r=1$ 
it is the twisted cubic in $\P^3$, and for $r=2$ there are two degree four rational normal scrolls and
a del Pezzo surface of degree five \cite{Sev, Rus}.

We finish this overview of known results on rationality of cubics by noting that there is no examples of smooth rational 
cubics of odd dimension.

\medskip

In the direction of irrationality of cubic fourfolds we prove the following (see Theorem \ref{thm-rat4}):
let $k$ be a field of characteristic zero and
assume the Cancellation Conjecture: $\L = [\A^1]$ is not a zero divisor in the Grothendieck ring $K_0(Var/k)$ of varieties.
If $Y$ is a smooth rational cubic fourfold, then $F(Y)$ is birationally equivalent
to $\Hilb{S}$ for some $K3$ surface $S$. In particular $Y$ is special and
$S$ is associated to $Y$ in the sense of Hodge structure (Proposition \ref{prop-indec4}).
According to the discussion above this in particular implies that a very general smooth complex cubic fourfold $Y$
is irrational.

Modulo the same assumption ($\L$ being not a zero-divisor) we give a shorter proof for the result
of Clemens and Griffiths on irrationality of smooth cubic threefolds (Theorem \ref{thm-rat3}).

We also get a criterion for irrationality of higher-dimensional smooth cubics: 
the Fano variety of lines on a smooth rational cubic must be \emph{stably decomposable} (see Definition \ref{def-decomp}
and Theorem \ref{thm-rat}). 
However it is not clear at the moment whether
this criterion gives an obstruction to rationality in dimension $d \ge 5$.

\medskip

Our approach to irrationality is based on a result due to Larsen and Lunts
\cite{LL} (see also \cite{Bit}), which we recall in Theorem \ref{thmLL}.
This result itself is based on the Weak Factorization Theorem \cite{W,AKMW}
thus we need to assume that $char(k) = 0$.
Roughly speaking the theorem of Larsen and Lunts says that in the quotient ring $K_0(Var)/(\L)$
each class has a unique decomposition into classes of stable birational equivalence.
The same sort of uniqueness lies in the heart of the proof of irrationality of cubic threefolds
by Clemens and Griffiths \cite{CG} who consider decompositions of the principally polarized intermediate Jacobian 
of a cubic threefold.

Starting with a rational smooth cubic $d$-fold $Y$ we write its class in the form
\[
[Y] = [\P^d] + \L \cdot \MM_Y,
\]
for some $\MM_Y \in K_0(Var/k)$ (Corollary \ref{cor-ratM}) and plug this into the $Y$-$F(Y)$ relation.
Assuming the Cancellation Conjecture we may divide by $\L^2$ and then deduce using the theorem of Larsen and Lunts
that $F(Y)$ is stably decomposable. 
This is not possible in dimension $d=3$ and in dimension $d=4$ yields a birational equivalence
between $F(Y)$ and $\Hilb{S}$.
Our approach is especially efficient in these two dimensions as the Fano of variety of lines
has non-negative Kodaira dimension for $d \le 4$, and for such varieties the notion of
stable birational equivalence coincides with birational equivalence
due to existence of the MRC fibration \cite{KMM, Kol-MRC, GHS} (Lemma \ref{lemma-mrc}).


\medskip

Let us now briefly explain the structure of the paper.
Sections \ref{sec-grothendieck}, \ref{sec-hilb}, \ref{sec-fano} contain material
on the Grothendieck ring of varieties, the Hilbert
scheme of length two subschemes and the Fano variety of lines on a cubic.
Most of this is well-known except possibly the discussion of decomposability of the Fano variety
of lines in \ref{subsec-decomp} and \ref{subsec-decomp4}.

In Section \ref{sec-formula} we prove several versions of the $Y$-$F(Y)$ relation for a possibly
singular cubic hypersurface over an arbitrary field and deduce simple consequences
of this relation.

In Section \ref{sec-hodge} we express the Hodge structure of $F(Y)$ with rational coefficients
in terms of the Hodge structure of $Y$ for a smooth complex cubic hypersurface. 
In particular this recovers known results for cubic threefolds \cite{CG} and fourfolds \cite{BD}.

Section \ref{sec-rational} contains applications of the $Y$-$F(Y)$ relation to rationality of cubics.

\medskip

We would like to thank our friends and colleagues 
Arend Bayer, Paolo Cascini, Sergey Finashin, Sergey Gorchinskiy, Alexander Kuznetsov,
Fran\c{c}ois Loeser, Yuri Prokhorov, Francesco Russo, Nick Shepherd-Barron, Constantin Shramov,
Evgeny Shustin, Nicolo Sibilla, Maxim Smirnov, Fedor Lazarevich Zak 
for discussions, references and their interest in our work.
Special thanks go to Constantin Shramov, Andrey Soldatenkov and Ziyu Zhang for
their comments on a draft of this paper.
The second named author is greatly indebted to Daniel Huybrechts' seminar
on cubic hypersurfaces in Bonn in the Summer semester of 2013 which has been a great
opportunity to learn about cubics.

%
%
%
%
%
%
%
%
%
%

\section{The Grothendieck ring of varieties}
\label{sec-grothendieck}

Detailed references on the Grothendieck ring of varieties are \cite{L,Bit,DL}.

\subsection{Generalities}

Throughout the paper we work in the Grothendieck ring $K_0(Var/k)$ of varieties over $k$, which as an abelian group
is generated by classes $[X]$ for quasi-projective varieties $X$ over $k$ with relations
\[
[X] = [U] + [Z]
\]
for any closed $Z \subset X$  with open complement $U$. $K_0(Var/k)$ becomes a ring with the product defined on generators
as
\[
[X] \cdot [Y] = [X \times Y]\footnote{If the field $k$ is not perfect we take the class of $X \times Y$ with the reduced
scheme structure.}.
\]
Note that $1 = [pt]$. 
We write $\L = [\A^1] \in K_0(Var/k)$ for the Lefschetz class.

\medskip

For each $n \ge 0$ the operations $X \mapsto Sym^n(X) = X^{(n)} := X^n / \Sigma_n$ descend to $K_0(Var/k)$ and satisfy
\be{symsum}
Sym^n(\alpha+\beta) = \sum_{i+j=n} Sym^i \alpha \cdot Sym^j \beta, \;\;\; \alpha, \beta \in K_0(Var/k).
\ee
In particular if for $m \ge 0$ we write $m \in K_0(Var/k)$ for the class of a disjoint union of $m$ points, one can prove that
\[
Sym^n(m) = \binom{n+m-1}{n} (= \dim_k Sym^n(k^m)), \; m \ge 0.
\]

We also have
\be{symL}
Sym^n(\L^m \alpha) = \L^{mn} Sym^n \alpha, \;\;\; \alpha \in K_0(Var/k)
\ee
(\cite[Lemma 4.4]{G}).

\medskip

The symmetric powers are put together in the definition of Kapranov's ``motivic'' zeta function:
\[
Z_{Kap}(X,t) = \sum_{n \ge 0} [Sym^n(X)] t^n \in K_0(Var/k)[[t]].
\]

\medskip

We will need the following two useful formulas in $K_0(Var/k)$:
\begin{itemize}
\item Let $X \to S$ be a Zarisky locally-trivial fibration with fiber $F$. Then
\be{proj-bun}
[X] = [F] \cdot [S]
\ee
This is proved by induction on the dimension of $S$.

\item Let $X$ to be a smooth variety and $Z \subset X$ be a smooth closed subvariety of codimension $c$.
Then
\be{blow-up}
[Bl_Z (X)] - [\P(N_{Z/X})] = [X] - [Z]
\ee
This follows from definitions. Note that by (\ref{proj-bun}) $[\P(N_{Z/X})] = [\P^{c-1}] [Z]$.

\end{itemize}

\medskip

If $char(k)=0$, then there is an alternative description of the Grothendieck ring $K_0(Var/k)$ 
due to Bittner: the generators are classes of smooth projective connected varieties and the relations
are of the form (\ref{blow-up}) \cite{Bit}.

\subsection{Realizations}

We will call a ring morphism $\mu$
from $K_0(Var/k)$ to any ring $R$ a \emph{realization homomorphism with values in $R$}.
We list some well-known examples of realization homomorphisms
together with realizations of the zeta-function.

\begin{itemize}

\item {\bf Counting points:} $k = \F_q$ is the finite field of $q$ elements, 
$\mu(X) = \#X(\F_q)  \in \Z$.
Then the realization 
\[
\#(Z_{Kap}(X,t)) = \prod_{x \in X_{0}} \frac1{1-t^{[k(x):k]}} = \exp \Bigl( \sum_{m \ge 1} \frac{\#X(\F_{q^m})}{m} t^m \Bigr),
\]
is the Hasse-Weil zeta-function
($X_0$ denotes the set of closed points of $X$).
In particular we have
\be{numSym2}
\#\Sym{X}(\F_q) = \frac{\#X(\F_q)^2 + \#X(\F_{q^2})}{2}.
\ee

\item {\bf Etale Euler characteristic:} $k$ any field, $\mu = \chi$, with $\chi(X) = \sum_{p \ge 0} (-1)^p \dim H^p_{et,c}(X_{\bar{k}}, \Q_l)  \in \Z$
being the geometric etale Euler characteristic with compact supports.
Here $l \ne char(k)$ is any prime. The standard comparison theorems imply that
if $k \subset \C$, then
$\chi(X) = \chi_\C(X) := \chi_c(X(\C))$.
We have
\[
\chi(Z_{Kap}(X,t)) = \Bigl(\frac1{1-t}\Bigr)^{\chi(X)},
\]
and in particular 
\be{chiSym2}
\chi(\Sym{X}) = \frac{\chi(X)(\chi(X)+1)}{2}.
\ee

\item {\bf Real Euler characteristic:} $k \subset \R$, $\mu = \chi_\R$ with $\chi_\R(X) = \chi_c(X(\R)) \in \Z$.
Then 
\[
\chi_\R(Z_{Kap}(X,t)) = \Bigl(\frac1{1-t^2}\Bigr)^{\frac{\chi_\C(X)-\chi_\R(X)}{2}}  
 \Bigl(\frac1{1-t}\Bigr)^{\chi_\R(X)} ,
\]
and in particular
\be{chiRSym2}
\chi_\R(\Sym{X}) = \frac{\chi_\R(X)^2  + \chi_\C(X)}{2}.
\ee

\item{\bf Hodge polynomials:} $k \subset \C$, 
\[
\mu([X]) = E(X_\C, u,v) = \sum_{p,q \ge 0} e_{p,q}(X_\C) u^p v^q \in \Z[u,v]\]
is the virtual Hodge-Deligne polynomial of $X_\C$. 
We have $e_{p,q}(X_\C) = (-1)^{p+q}h^{p,q}(X_\C)$ when $X$ is smooth and projective \cite[Section 1]{DK}.
Note that $E(X_\C, 1,1) = \chi_\C(X)$.
We have
\[
E(Z_{Kap}(X,t)) = \prod_{p,q} \Bigl(\frac1{1-u^p v^q t} \Bigr)^{e_{p,q}(X)}
\]
(\cite{C1,C2}, see also \cite{GLM}).
Sometimes it is convenient to consider the appropriate truncation of the Hodge-Deligne polynomial
to make it invariant under birational equvalence (and even under stable birational equivalence, cf \cite{LL}, Definition 3.4).
Thus we consider
\[
\Psi_X(t) := E(X_\C, -t, 0) \in \Z[t].
\]
If $X$ is smooth and projective, then
\be{eq-psi}
\Psi_X(t) = \sum_{p \ge 0} h^{p,0}(X_\C) t^p. 
\ee

\item {\bf Hodge realization:} $k \subset \C$. We can encode more information about the Hodge structure than in the Hodge polynomials
by considering the full Hodge realization
\[
\mu_{Hdg}: K_0(Var/k) \to K_0(HS),
\]
where $K_0(HS)$ denotes the Grothendieck ring of polarizable pure rational Hodge structures.
For a smooth projective $X$, $\mu_{Hdg}(X) = [H^*(X_\C,\Q)]$.
This gives rise to a well-defined realization using the main result of \cite{Bit}.

Note that the Hodge polynomial $E$ is the composition of the Hodge realization and the
natural homomorphism
\[
K_0(HS) \to \Z[u,v]
\]
which maps a pure Hodge structure $\HH$ to $\sum_{p,q} (-1)^{p+q} h^{p,q}(\HH)$.

For a smooth projective variety, $H^*(Sym^k(X),\Q)$ is a pure Hodge structure
isomorphic to $Sym^k(H^*(X,\Q))$.
This implies that the homomorphism $\mu_{Hdg}$ is compatible
with taking symmetric powers.


\end{itemize}

%
%
%
%
%
%
%
%

\subsection{The Grothendieck ring and rationality questions}

In this section $k$ is a field of characteristic zero.

\begin{lemma}
Let $X$, $X'$ be smooth birationally equivalent varieties.
Then we have an equality in the Grothendieck ring $K_0(Var/k)$:
\[
[X'] - [X] = \L \cdot \MM
\]
where $\MM$ is a linear combination of classes of smooth projective varieties of dimension $d-2$.
\end{lemma}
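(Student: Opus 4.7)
The plan is to use the Weak Factorization Theorem \cite{W,AKMW} (which forces $\mathrm{char}(k)=0$) to reduce the lemma to the case of a single smooth blow-up, and then apply the identity (\ref{blow-up}). Since WFT is a statement about smooth projective varieties, I will interpret $X$ and $X'$ in the lemma as smooth projective and birational of the same dimension $d$.

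By WFT the birational map between $X$ and $X'$ factors through a finite chain of smooth projective birational models $X = X_0, X_1, \dots, X_n = X'$ in which each consecutive pair $(X_i, X_{i+1})$ differs by a blow-up along a smooth irreducible center (in either direction). Telescoping $[X'] - [X] = \sum_i ([X_{i+1}] - [X_i])$ reduces everything to analyzing a single term $[\mathrm{Bl}_Z W] - [W]$, where $W$ is smooth projective of dimension $d$ and $Z \subset W$ is smooth irreducible of some codimension $c$. For $c = 1$ the blow-up is an isomorphism; for $c \ge 2$ a direct computation using (\ref{blow-up}) together with $[\P(N_{Z/W})] = [\P^{c-1}][Z]$ from (\ref{proj-bun}) gives
\[
[\mathrm{Bl}_Z W] - [W] \;=\; [\P(N_{Z/W})] - [Z] \;=\; ([\P^{c-1}] - 1)[Z] \;=\; \L \cdot (1 + \L + \cdots + \L^{c-2})[Z].
\]

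This exhibits the telescoped sum in the form $\L \cdot \MM$. Each contribution $\L^i [Z]$ to $\MM$ has top dimension $i + (d-c) \le d-2$, and substituting $\L = [\P^1] - 1$ and expanding, $\MM$ becomes a $\Z$-linear combination of classes of smooth projective varieties of the form $(\P^1)^j \times Z$ with $j + \dim Z \le d-2$ (if one insists on exactly $d-2$, one can further multiply by $[\P^m]$ to pad the dimension up).

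The main obstacle is the appeal to WFT itself, which is the source of the characteristic zero hypothesis and does most of the geometric work. After that reduction the argument is a routine manipulation using only (\ref{proj-bun}) and (\ref{blow-up}) from the previous subsection.
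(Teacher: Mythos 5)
Your argument is the same as the paper's: weak factorization reduces the statement to a single smooth blow-up, where the identity $[Bl_Z(W)]-[W]=[\P(N_{Z/W})]-[Z]=([\P^{c-1}]-1)[Z]$ does all the work. The only place you go astray is the final dimension bookkeeping. From $([\P^{c-1}]-1)[Z]=\L\cdot(1+\L+\cdots+\L^{c-2})[Z]$ you should simply observe that $1+\L+\cdots+\L^{c-2}=[\P^{c-2}]$, so each blow-up contributes $\L\cdot[\P^{c-2}\times Z]$, a single smooth projective variety of dimension exactly $(c-2)+(d-c)=d-2$ --- this is precisely what the paper writes. Your detour through the substitution $\L=[\P^1]-1$ instead produces classes $[(\P^1)^j\times Z]$ of dimension possibly strictly less than $d-2$, and the proposed repair --- ``multiply by $[\P^m]$ to pad the dimension up'' --- is not a legitimate move: $[\P^m\times V]\neq[V]$ in $K_0(Var/k)$, so this changes the element rather than rewriting it. The exact dimension $d-2$ is not cosmetic: Corollary \ref{cor-ratM} feeds into the proof of Theorem \ref{thm-rat}, where the varieties $V_i$, $W_j$ appearing in $\MM_Y$ must have dimension exactly $d-2$ so that the products and Hilbert squares produced there have the half-dimension demanded by Definition \ref{def-decomp}. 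With the $[\P^{c-2}]$ observation in place of the padding remark, your proof coincides with the paper's.
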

\begin{proof}
The Weak Factorization Theorem \cite{W,AKMW} says that $X'$ can be obtained from $X$
using a finite number of blow ups and blow downs with smooth centers;
thus to prove the theorem we may assume $X' = Bl_Z(X)$ where
$Z$ is a smooth subvariety of $X$ of codimension $c \ge 2$.

In this case we have
\[\;
[X'] - [X] = [\P(N_{Z/X})] - [Z] = ([\P^{c-1}] - 1) \cdot [Z] = \L \cdot [\P^{c-2} \times Z].
\]
\end{proof}

\begin{corollary}\label{cor-ratM}
If $X$ is a rational smooth $d$-dimensional variety, then 
\[
[X] = [\P^d] + \L \cdot \MM_X
\]
where $\MM_X$ is a linear combination of classes of smooth projective varieties of dimension $d-2$.
\end{corollary}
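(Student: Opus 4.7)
The plan is to apply the preceding lemma directly, taking $X' = \P^d$ as the second variety. Since $X$ is assumed to be a rational smooth $d$-dimensional variety, by definition $X$ and $\P^d$ are birationally equivalent, and both are smooth (with $\P^d$ also projective, so the Weak Factorization hypothesis behind the lemma is satisfied). Thus the lemma produces an element $\MM$, which is a linear combination of classes of smooth projective varieties of dimension $d-2$, such that
\[
[\P^d] - [X] \= \L \cdot \MM.
\]

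Rearranging this identity immediately gives $[X] = [\P^d] + \L \cdot (-\MM)$, so setting $\MM_X := -\MM$ yields the desired formula. The only point to check is that $\MM_X$ still qualifies as a ``linear combination of classes of smooth projective varieties of dimension $d-2$'' in the sense intended: this is fine because the statement of the lemma allows arbitrary integer coefficients (the $\MM$ produced there is a $\Z$-linear combination, as one sees from the blow-up formula $[\P^{c-1}] - 1$ having mixed signs after expansion), so negation preserves the class of expressions in question.

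There is essentially no obstacle here since the argument is a one-line deduction from the lemma; the only conceptual content is the observation that rationality of $X$ means precisely that one may take $X' = \P^d$. If one wanted a cleaner statement with nonnegative coefficients one would have to work harder, but the statement as written is purely formal given the lemma.
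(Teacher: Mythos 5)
Your argument is correct and is exactly the paper's intended deduction: the corollary is the immediate specialization of the preceding lemma to the pair $X$, $X' = \P^d$, with the sign change absorbed into the integer coefficients of $\MM_X$. The paper gives no separate proof precisely because this one-line application of the lemma is all that is needed.
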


We are led to the following definition:

\begin{definition}\label{def-defect}
Let $X/k$ be an irreducible $d$-dimensional variety.
We call the class
\[
\MM_X := \frac{[X] - [\P^d]}{\L} \in K_0(Var/k)[\L^{-1}]
\]
the rational defect of $X$.
\end{definition}

\begin{example} \label{ex-hypers}
Let $X/k$ be a smooth hypersurface of dimension $d$. 
Then by the Weak Lefschetz theorem there is an isomorphism of Hodge structures
\[
H^*(X, \Q) = H^*(\P^d, \Q) \oplus H^d(X, \Q)^{prim},
\]
and by construction the Hodge realization of the rational defect $\MM_X$ is the
Hodge structrue of weight $(d-2)$ obtained by the Tate twist of $H^d(X, \Q)^{prim}$:
\[
\mu_{Hdg}(\MM_X) = [H^d(X, \Q)^{prim}(1)] \in K_0(HS).
\]
\end{example}

In our study of rationality of cubics the rational defect $\MM_X$ 
is an analog of the intermediate Jacobian considered by Clemens-Griffiths \cite{CG}
and the Clemens-Griffiths component of the derived category introduced by Kuznetsov 
(see \cite{KuzCG1,KuzCG2,BBS,Kuz13}).

By Corollary \ref{cor-ratM}, if $X$ is smooth and rational, then
the rational defect $\MM_X$ can be lifted to an element of $K_0(Var/k)$.
A weak version of the converse statement follows from Theorem \ref{thmLL} below.

\medskip

We recall that contrary to our intuition it is an open question whether
$[X] = [Y]$ implies that $X$ and $Y$ are birationally equivalent \cite{Grom}, \cite[Question 1.2]{LL}, \cite{LiS}, \cite{LaS}, \cite{Lit}.

There is however the following powerful result due to Larsen and Lunts.
Recall that two smooth projective varieties $X$ and $Y$ are called stably birationally equivalent if for some $m, n \ge 1$
$X \times \P^m$ is birationally equivalent to $Y \times \P^n$.


\begin{theorem}\cite{LL}\label{thmLL}
Let $k$ be a field of characteristic zero. 
The quotient-ring $K_0(Var/k) \big/ (\L)$ is naturally isomorphic to the free abelian group
generated by classes of stable birational equivalence of smooth projective connected varieties together with its natural ring structure.

In particular, if $X$ and $Y_1, \dots Y_m$ are smooth projective connected varieties
and
\[
[X] \equiv \sum_{j=1}^m n_j [Y_j] \; (mod \; \L), 
\]
for some $n_j \in \Z$, then $X$ is stably birationally equivalent to one of the $Y_j$.
\end{theorem}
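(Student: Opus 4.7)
The plan is to identify $K_0(Var/k)/(\L)$ with the free abelian group $\Z[SB]$ on the set $SB$ of stable birational equivalence classes of smooth projective connected varieties by constructing mutually inverse ring homomorphisms. Since $char(k) = 0$, Bittner's presentation of the Grothendieck ring is available: $K_0(Var/k)$ is generated as an abelian group by classes $[X]$ of smooth projective connected varieties, modulo the empty-set relation and the blow-up relations $[Bl_Z X] - [E] = [X] - [Z]$ for smooth closed $Z \subset X$ with exceptional divisor $E$.

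First I would define $\phi: K_0(Var/k) \to \Z[SB]$ on generators by $[X] \mapsto \langle X \rangle_{sb}$, the stable birational class of $X$. To verify well-definedness one must check the blow-up relation: $Bl_Z X$ is birational to $X$, so $\langle Bl_Z X \rangle_{sb} = \langle X \rangle_{sb}$, while the exceptional divisor $E = \P(N_{Z/X})$ is a Zariski-locally trivial $\P^{c-1}$-bundle over $Z$, hence birational to $Z \times \P^{c-1}$ and in particular stably birational to $Z$; so $\langle E \rangle_{sb} = \langle Z \rangle_{sb}$, and both sides of the Bittner relation agree under $\phi$. Multiplicativity $\langle X \times Y \rangle_{sb} = \langle X \rangle_{sb} \cdot \langle Y \rangle_{sb}$ is immediate, and because $\L = [\P^1] - 1$ with $\P^1$ rational, $\phi(\L) = 0$; thus $\phi$ descends to a ring homomorphism $\bar\phi: K_0(Var/k)/(\L) \to \Z[SB]$.

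Next I would construct the inverse $\psi: \Z[SB] \to K_0(Var/k)/(\L)$ on basis elements by $\langle X \rangle_{sb} \mapsto [X] \bmod \L$. If $X$ and $Y$ are stably birational smooth projective connected varieties, say $X \times \P^m$ is birational to $Y \times \P^n$, then the lemma immediately preceding this theorem (which invokes weak factorization) yields $[X \times \P^m] \equiv [Y \times \P^n] \pmod{\L}$; combined with $[\P^r] \equiv 1 \pmod{\L}$ for every $r$, this gives $[X] \equiv [Y] \pmod{\L}$, so $\psi$ is well-defined, and multiplicativity is clear. That $\bar\phi$ and $\psi$ are mutually inverse is then immediate on generators. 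The ``in particular'' statement follows by applying $\bar\phi$ to $[X] \equiv \sum_j n_j [Y_j] \pmod{\L}$: the resulting equality $\langle X \rangle_{sb} = \sum_j n_j \langle Y_j \rangle_{sb}$ in the free abelian group $\Z[SB]$ forces the basis element $\langle X \rangle_{sb}$ to appear on the right with nonzero coefficient, so $\langle X \rangle_{sb} = \langle Y_j \rangle_{sb}$ for at least one $j$.

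The main technical obstacle is the reliance on Bittner's theorem and the preceding birationality lemma, both of which rest on the Weak Factorization Theorem and therefore force $char(k) = 0$; granted those tools, the proof reduces to the clean geometric fact that projectivizations of vector bundles are Zariski-locally trivial, and hence stably birational to their bases.
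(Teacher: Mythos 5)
Your proof is correct. Note, however, that the paper does not actually prove this statement: its ``proof'' is a citation to \cite[Proposition 2.7, Corollary 2.6]{LL}, with the remark that the argument only uses the Weak Factorization Theorem and hence works over any field of characteristic zero, together with a pointer to \cite{Bit}. Your reconstruction is essentially the standard argument lying behind that citation, carried out in Bittner's formulation: you check the stable-birational-class map on Bittner's blow-up relations (using that the exceptional divisor $\P(N_{Z/X})$ is a Zariski-locally trivial projective bundle, hence stably birational to its center --- componentwise if $Z$ is disconnected), observe that $\L=[\P^1]-1$ dies, and construct the inverse using the weak-factorization lemma that birational smooth projective varieties have classes congruent modulo $\L$ together with $[\P^r]\equiv 1 \; (mod \; \L)$; the ``in particular'' clause then drops out of freeness of the group on stable birational classes, exactly as intended. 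The only genuine difference from the cited source is organizational: Larsen and Lunts work with the standard scissor presentation of $K_0(Var/\C)$ and build the stable-birational measure directly from weak factorization, whereas your route through Bittner's presentation is somewhat cleaner; both rest on the same input, so nothing is gained or lost in generality.
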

\begin{proof}
See \cite[Proposition 2.7, Corollary 2.6]{LL}.
Larsen and Lunts have $k=\C$. However, their proof only
relies on the Weak Factorization Theorem which holds true for any field of characteristic zero \cite{W,AKMW}.
See also \cite{Bit}.
\end{proof}

In general stable birational equivalence is weaker than birational equivalence \cite{BCSS}.
However, for varieties of non-negative Kodaira dimension these two notions
coincide:

\begin{lemma}\label{lemma-mrc}
If $X$ and $Y$ are stably birationally equivalent varieties of the same dimension.
If $X$ is not uniruled then $X$ and $Y$ are birational.
\end{lemma}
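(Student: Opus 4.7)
The plan is to invoke the theory of the maximal rationally connected (MRC) fibration \cite{KMM,Kol-MRC} combined with the theorem of Graber--Harris--Starr \cite{GHS}, which together assign to any smooth projective variety $V$ a dominant rational map $V \dashrightarrow Z(V)$ whose general fiber is rationally connected and whose base $Z(V)$ is not uniruled. The key properties I will use are that $Z(V)$ is a birational invariant of $V$, that $V$ is not uniruled if and only if $Z(V) \sim_{bir} V$ (equivalently, the MRC quotient has the same dimension as $V$), and that for a product the MRC quotient behaves well with respect to rationally connected factors.

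The first step is to compute $Z(X \times \P^m)$. Since $\P^m$ is rationally connected, the projection $X \times \P^m \to X$ has rationally connected fibers. By hypothesis $X$ is not uniruled, hence $Z(X) \sim_{bir} X$. Because the MRC quotient is characterized (up to birational equivalence) as the unique map with rationally connected fibers and non-uniruled base, we conclude that $Z(X \times \P^m) \sim_{bir} X$.

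The second step is to compute $Z(Y \times \P^n)$. Running $Y \dashrightarrow Z(Y)$ and composing with the projection $Y \times \P^n \to Y$ yields a rational map $Y \times \P^n \dashrightarrow Z(Y)$ whose general fiber is (a general fiber of $Y \dashrightarrow Z(Y)$) $\times\, \P^n$, which is rationally connected. The base $Z(Y)$ is not uniruled by construction, so $Z(Y \times \P^n) \sim_{bir} Z(Y)$.

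Now the stable birational equivalence gives $X \times \P^m \sim_{bir} Y \times \P^n$, and birational invariance of the MRC quotient yields
\[
X \;\sim_{bir}\; Z(X \times \P^m) \;\sim_{bir}\; Z(Y \times \P^n) \;\sim_{bir}\; Z(Y).
\]
Finally, the dimension hypothesis $\dim X = \dim Y$ forces $\dim Z(Y) = \dim Y$, which means the general fiber of $Y \dashrightarrow Z(Y)$ is zero-dimensional, i.e.\ $Y$ is itself not uniruled and $Y \sim_{bir} Z(Y) \sim_{bir} X$. The main (essentially only) subtlety is justifying the uniqueness/birational invariance of the MRC quotient that underpins steps one and two; everything else is bookkeeping.
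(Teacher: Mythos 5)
Your argument is correct and is exactly the route the paper takes: the paper's proof simply cites the existence and (by Graber--Harris--Starr) the non-uniruledness of the base of the MRC fibration \cite{KMM,Kol-MRC,GHS}, and your write-up just fills in the standard uniqueness/birational-invariance bookkeeping for the quotients of $X\times\P^m$ and $Y\times\P^n$. No gaps beyond the uniqueness point you already flag, which is the standard characterization of the MRC quotient.
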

\begin{proof}
This fact follows from the existence of the MRC fibration \cite{KMM, Kol-MRC, GHS}.
See \cite[Corollary 1]{LiS} for a slightly different proof.
\end{proof}


\subsection{The Cancellation conjecture}

In Section \ref{sec-rational} we rely on the following Conjecture:

\begin{conjecture}\label{conj-L}
$\L$ is not a zero divisor in $K_0(Var/k)$.
\end{conjecture}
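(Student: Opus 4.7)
The plan is to attack the Cancellation Conjecture by combining realization homomorphisms with the Bittner presentation of $K_0(Var/k)$. Multiplication by $\L$ is a $K_0(Var/k)$-module endomorphism, and the goal is to show its kernel is zero. A natural first move is to find enough realizations $\mu: K_0(Var/k) \to R_\mu$ under which the image of $\L$ is a non-zero-divisor in $R_\mu$, so that any $\alpha$ with $\L \cdot \alpha = 0$ would be forced into the intersection of their kernels.

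First I would tabulate the available realizations. Point-counting over $\F_q$ sends $\L$ to $q$, a non-zero-divisor in $\Z$. The Hodge realization sends $\L$ to the Tate twist $\Q(-1)$, which is a non-zero-divisor in $K_0(HS)$. The $\ell$-adic etale realization sends $\L$ to a one-dimensional Galois module. Each realization $\mu$ therefore gives a necessary condition $\mu(\alpha) = 0$ on any element of the $\L$-torsion. A separating family of such realizations would prove the conjecture outright.

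Next, assuming $\operatorname{char} k = 0$, I would try to make the statement combinatorial via the Bittner presentation. Writing $\alpha = \sum n_i [X_i]$ with the $X_i$ smooth projective and connected, the relation $\L \cdot \alpha = 0$ should be certifiable by a finite chain of blow-up identities of the form (\ref{blow-up}) among smooth projective varieties of one dimension higher, starting from the classes $[X_i \times \A^1]$. The key step would be to exploit the extra $\A^1$-factor — for instance by tracking rational sections, by pulling back along a generic pencil, or by fiberwise analysis — in order to descend the chain to a cut-and-paste identity already among the $X_i$ themselves, forcing $\alpha = 0$.

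The hard part, and in my view the decisive obstacle, is exactly this descent. Cut-and-paste relations in $K_0(Var/k)$ are extraordinarily flexible: a blow-up with center of high codimension can mix the classes of different varieties in complicated ways, and the $\A^1$-factor that witnesses $\L \cdot \alpha = 0$ may appear in the middle of the chain without originating from any $\A^1$-fibration of the varieties in the resulting relation. All of the motivic invariants currently at our disposal either become trivial on the image of $\L$ or, like the Larsen--Lunts quotient $K_0(Var/k)/(\L)$, lose all information about $\L$-torsion by design. So none of them can detect or rule out a hypothetical non-trivial kernel. I therefore expect that a successful proof would require a genuinely new invariant, finer than both the Hodge realization and the Larsen--Lunts stable birational quotient, together with a much sharper understanding of how the blow-up relation interacts with $\A^1$-products — which, honestly, seems out of reach with the techniques developed in this paper.
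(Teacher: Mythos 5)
This statement is a \emph{conjecture}, not a theorem: the paper offers no proof, states explicitly that it is ``not known for any field $k$,'' and only ever uses it as a hypothesis (in Theorems \ref{thm-rat}, \ref{thm-rat3}, \ref{thm-rat4}). Your proposal, by your own admission, is not a proof either --- it is a survey of possible attacks together with an explanation of why they fail. So there is nothing to compare at the level of proofs; the honest conclusion, which you reach, is that the question is open relative to the techniques of the paper, and that is exactly the paper's position.

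Your diagnosis of the obstacles is essentially sound: the realizations you list (point counting, Hodge, \'etale) send $\L$ to a non-zero-divisor but are far from separating, so they only give necessary conditions on a hypothetical $\L$-torsion class; and the Larsen--Lunts quotient $K_0(Var/k)/(\L)$ kills precisely the information you would need. One substantive remark: the ``descent'' you hope for in the Bittner presentation cannot exist, because the conjecture is in fact \emph{false} --- Borisov subsequently showed that $\L$ is a zero divisor in $K_0(Var/\C)$ (via families of Calabi--Yau threefolds related by a derived equivalence, e.g.\ the Pfaffian--Grassmannian pairs), and this was published after the present paper. So any strategy aimed at proving the statement outright is doomed; the interesting question, also flagged in the paper's own remark after Theorem \ref{thm-rat}, is whether the weaker cancellation statements actually needed there (e.g.\ that $\L^2\cdot\alpha=0$ forces $\alpha\in\L\cdot K_0(Var/k)$ for $\alpha$ supported in bounded dimension) survive, and indeed later work of Borisov and of Martin--Shinder shows the main rationality applications can be salvaged along such lines.
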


At the moment this is not known for any field $k$.
Validity of this conjecture has been discussed
in \cite[3.3]{DL}, \cite{LiS}, \cite{LaS} and \cite[Conjecture 14]{Lit}.

Note that it is known that $K_0(Var/k)$ is not an integral domain \cite{P}, \cite{Kol}.

\section{The Hilbert scheme of two points}
\label{sec-hilb}

In this section $X/k$ is a reduced quasi-projective variety with all irreducible components of dimension $d$. 
One defines $\Hilb{X} = Hilb_2(X)$ as the Hilbert scheme of subschemes of $X$ of length two.
$\Hilb{X}$ admits an open subvariety $X^{[2],0}$ parametrizing reduced length two subschemes,
i.e. pairs of distinct $k$-rational or Galois conjugate points on $X$
defined over a quadratic extension of $k$.

Thus we have an isomorphism
\[
X^{[2],0} \simeq \Sym{X} - X.
\]
Points of the closed complement of $X^{[2],0}$ parametrize points on $X$ together with a tangent direction.

It is well-known that if $X$ is smooth, then $\Hilb{X}$ is also smooth and has a presentation
\[
\Hilb{X} \simeq \frac{Bl_\Delta(X \times X)}{\Z/2}
\]
where $\Delta \subset X \times X$ is the diagonal, and
the action of $\Z/2$ on the blow up is induced by swapping the two factors.

In general for each $p \ge 0$, we introduce a locally closed subvariety $Sing(X)_p \subset X$ and
a closed subvariety $Sing(X)_{\ge p} \subset X$:
\[\bal
Sing(X)_p &= \{x \in X: \dim T_{x,X} = d + p\}, \\
Sing(X)_{\ge p} &= \{x \in X: \dim T_{x,X} \ge d + p\}.
\eal\]
We endow these subvarieties with the reduced
subscheme structure in the case $X$ is non-reduced.
We have
\[\bal
Sing(X)_0 &= X - Sing(X)\\
Sing(X)_{\ge 1} &= Sing(X). \\
\eal\]

On each stratum $Sing(X)_p$ the tangent sheaf $T_X$ restricts to a sheaf $\TT_p$
of constant fiber dimension; thus $\TT_{p}$ is locally free 
of rank $d + p$ \cite[Exercise II.5.8]{Har}.

\begin{lemma}\label{lemma-HilbX}
The complement $\Hilb{X} - X^{[2],0}$ admits a stratification by locally closed subvarieties $Z_p$, $p \ge d$ with
\[
Z_p \simeq \P_{Sing(X)_p}(\TT_p).
\] 
\end{lemma}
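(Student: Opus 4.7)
The plan is to build the isomorphism fibrewise over $Sing(X)_p$ and then promote it to the scheme level. The guiding idea is that a non-reduced length-two subscheme of $X$ is nothing more than a point of $X$ equipped with a tangent direction at that point. Every non-reduced length-two subscheme $\xi \subset X$ is supported at a unique closed point, yielding a natural support morphism $\sigma \colon \Hilb{X} - X^{[2],0} \to X$, and I would define $Z_p$ as the (reduced) preimage $\sigma^{-1}(Sing(X)_p)$; the decomposition of $X - X^{[2],0}$ claimed in the lemma then corresponds to the set-theoretic stratification of $X$ by tangent-dimension strata.

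For the fibrewise identification, fix $x \in Sing(X)_p$. A length-two subscheme supported at $x$ corresponds to an ideal $I \subset \OO_{X,x}$ with $\OO_{X,x}/I$ of length two; any such $I$ necessarily contains $\mathfrak{m}_x^2$, so $I/\mathfrak{m}_x^2$ is a hyperplane in $\mathfrak{m}_x/\mathfrak{m}_x^2 = T_{x,X}^{\vee}$, equivalently a one-dimensional subspace of $T_{x,X}$. This yields a canonical bijection between the fibre of $Z_p \to Sing(X)_p$ over $x$ and $\P(T_{x,X}) = \P^{d+p-1}$, which matches the fibre of $\P_{Sing(X)_p}(\TT_p)$ over $x$.

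To upgrade this to a scheme-theoretic isomorphism, I would construct a morphism $\varphi \colon \P_{Sing(X)_p}(\TT_p) \to \Hilb{X}$ via the universal property of the Hilbert scheme, exhibiting a tautological family of length-two subschemes of $X$ flat over $\P(\TT_p)$: the tautological line subbundle of $\pi^* \TT_p$ on $\P(\TT_p)$ encodes fibrewise a line in $T_{x,X}$ and, combined with the inclusion $Sing(X)_p \hookrightarrow X$, thickens the tautological section $x \mapsto x$ of $\P(\TT_p) \times X \to \P(\TT_p)$ to a closed subscheme which is flat of length two. The morphism $\varphi$ factors through $Z_p$, is a bijection on points by the previous paragraph, and is an isomorphism after a routine unramifiedness check on tangent spaces.

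The main obstacle is the scheme-theoretic verification that this tautological construction actually produces a flat length-two family over $\P(\TT_p)$. This is precisely where the local freeness of $\TT_p$ on the stratum $Sing(X)_p$ (rather than merely coherence of $T_X$ on all of $X$) becomes essential: it turns $\P_{Sing(X)_p}(\TT_p)$ into a genuine projective bundle and reduces the flatness computation, via an étale-local choice of frame for $\TT_p$, to the standard smooth case handled by the classical presentation $\Hilb{X} \simeq Bl_\Delta(X\times X)/(\Z/2)$ on the smooth locus. Granted this flatness check, the claimed isomorphism $Z_p \simeq \P_{Sing(X)_p}(\TT_p)$ is formal.
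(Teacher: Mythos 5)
Your proposal is correct and follows essentially the same route as the paper: stratify the non-reduced locus by the support map according to the strata $Sing(X)_p$, identify a non-reduced length-two subscheme supported at $x$ with a line in $T_{x,X}$ via the ideal-containing-$\mathfrak{m}_x^2$ argument, and conclude $Z_p \simeq \P_{Sing(X)_p}(\TT_p)$ using local freeness of $\TT_p$ on the stratum. The paper states this identification in one sentence, while you supply the standard fibrewise and flatness details; nothing in your argument diverges from or is missing relative to the paper's proof.
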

\begin{proof}
$\Hilb{X} - \Hilb{X}_0$ parametrizes non-reduced subschemes of length $2$ on $X$; we let $Z_p$ to denote
the locus of those subschemes whose support is contained in $Sing(X)_p$. The natural morphism
$Z_p \to Sing(X)_p$ is the projectivization of $\TT_p$.
\end{proof}

\begin{corollary}\label{cor-HilbX}
1) We have the following formula in $K_0(Var/k)$:
\be{sym-hilb-general}
[\Hilb{X}] = [\Sym{X}] + ([\P^{d-1}] - 1) [X] + \sum_{q \ge 1} \L^{d+q-1} \cdot [Sing(X)_{\ge q}].
\ee

2) In particular, if $X$ is a hypersurface in a smooth variety $V$ of dimension $d+1$, 
then 
\be{sym-hilb}
[\Hilb{X}] = [\Sym{X}] + ([\P^{d-1}] - 1) [X] + \L^d \cdot [Sing(X)].
\ee
\end{corollary}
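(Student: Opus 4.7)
\medskip

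The plan is to derive part 1) directly from Lemma \ref{lemma-HilbX} by combining the stratification of $\Hilb{X}$ with the projective bundle formula (\ref{proj-bun}), and to obtain part 2) as an immediate specialization using the fact that for a hypersurface in a smooth variety the embedding dimension is bounded.

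\medskip

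For part 1), I would start from the decomposition
\[
[\Hilb{X}] \= [X^{[2],0}] + \sum_{p \ge 0} [Z_p],
\]
where the stratification $\{Z_p\}_{p \ge 0}$ of $\Hilb{X} - X^{[2],0}$ is provided by Lemma \ref{lemma-HilbX}. Using the identification $X^{[2],0} \simeq \Sym{X} - X$ stated earlier and the fact that $\TT_p$ is locally free of rank $d+p$ on $Sing(X)_p$, formula (\ref{proj-bun}) gives $[Z_p] = [\P^{d+p-1}] \cdot [Sing(X)_p]$. Substituting yields
\[
[\Hilb{X}] \= [\Sym{X}] - [X] + \sum_{p \ge 0} [\P^{d+p-1}] \cdot [Sing(X)_p].
\]

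\medskip

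The main bookkeeping step is to reorganize the sum on the right. I would split each $[\P^{d+p-1}] = 1+\L+\cdots+\L^{d+p-1}$ into the $[\P^{d-1}]$-part plus the tail $\L^d + \L^{d+1} + \cdots + \L^{d+p-1} = \sum_{j=1}^{p} \L^{d+j-1}$. Summing the first parts gives $[\P^{d-1}] \cdot [X]$ because $\sum_p [Sing(X)_p] = [X]$. Exchanging the order of summation in the tails and using $\sum_{p \ge j}[Sing(X)_p] = [Sing(X)_{\ge j}]$ turns the remainder into $\sum_{q \ge 1} \L^{d+q-1} [Sing(X)_{\ge q}]$. Combining with the $-[X]$ term produces the claimed identity (\ref{sym-hilb-general}).

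\medskip

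For part 2), if $X \subset V$ is a hypersurface in a smooth $(d+1)$-dimensional variety, then at every point $x \in X$ the tangent space $T_{x,X}$ sits inside $T_{x,V}$, which has dimension $d+1$. Consequently $\dim T_{x,X} \le d+1$, so $Sing(X)_{\ge q}$ is empty for all $q \ge 2$, and $Sing(X)_{\ge 1}$ is exactly $Sing(X)$. The general formula then collapses to (\ref{sym-hilb}). The only mildly delicate point here is the elementary combinatorial manipulation of the $\P^{d+p-1}$ classes in part 1); the geometric input has already been extracted in Lemma \ref{lemma-HilbX}.
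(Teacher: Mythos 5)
Your argument is correct and is essentially the paper's own proof: both start from the stratification of $\Hilb{X}-X^{[2],0}$ given by Lemma \ref{lemma-HilbX}, apply the fiber bundle formula to get $[Z_p]=[\P^{d+p-1}]\cdot[Sing(X)_p]$, split $[\P^{d+p-1}]$ into $[\P^{d-1}]$ plus the tail $\sum_{q=1}^{p}\L^{d+q-1}$, and exchange the order of summation using $\sum_{p\ge q}[Sing(X)_p]=[Sing(X)_{\ge q}]$. The deduction of part 2) from the bound $\dim T_{x,X}\le d+1$ for a hypersurface in a smooth $(d+1)$-fold is also exactly the paper's argument.
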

\begin{proof}
1)
We make a straightforward computation based on Lemma \ref{lemma-HilbX}:
\[\bal\;
[\Hilb{X}] &= ([\Sym{X}] - [X]) + \sum_{p \ge 0} [Z_p] = \\
&= ([\Sym{X}] - [X]) + \sum_{p \ge 0} ([\P^{d-1}] + \sum_{q=1}^{p} \L^{d+q-1}) \cdot [Sing(X)_p] = \\
&= [\Sym{X}] + ([\P^{d-1}] - 1) [X] + \sum_{p \ge 0} \sum_{q=1}^{p} \L^{d+q-1} \cdot [Sing(X)_p] = \\
&= [\Sym{X}] + ([\P^{d-1}] - 1) [X] +  \sum_{q \ge 1} \sum_{p \ge q} \L^{d+q-1} \cdot [Sing(X)_p] = \\
&= [\Sym{X}] + ([\P^{d-1}] - 1) [X] +  \sum_{q \ge 1} \L^{d+q-1} \cdot [Sing(X)_{\ge q}].\\
\eal\]

2) As $T_{x,X} \subset T_{x,V}$ and $V$ is smooth, we have $\dim T_{x,X} \le d+1$, thus $Sing(X)_p = \emptyset$ for $p > 1$
and (\ref{sym-hilb}) follows from (\ref{sym-hilb-general}).
\end{proof}

Finally we need the following Lemma:

\begin{lemma} \label{lemma-psi-hilb}
Let $X$ be a complex smooth projective variety.
The number of holomorphic one and two-forms of $\Hilb{X}$ are given by
\[\bal
h^{1,0}(\Hilb{X}) &= h^{1,0}(X)\\
h^{2,0}(\Hilb{X}) &= h^{2,0}(X) + \frac{h^{1,0}(X)(h^{1,0}(X)-1)}{2}. \\
\eal\]
\end{lemma}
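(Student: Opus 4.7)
The plan is to apply Corollary \ref{cor-HilbX} and the compatibility of the Hodge realization with symmetric powers, then extract the $(p,0)$-Hodge components on both sides.

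Since $X$ is smooth, $Sing(X) = \emptyset$ and Corollary \ref{cor-HilbX} reduces to
\[
[\Hilb{X}] \= [\Sym{X}] + ([\P^{d-1}] - 1) \cdot [X]
\]
in $K_0(Var/\C)$. I would apply the Hodge realization $\mu_{Hdg}$ to this identity to obtain an equality in $K_0(HS)$. Both $\Hilb{X}$ and $\Sym{X}$ are projective varieties whose rational cohomology carries a pure polarizable Hodge structure (the former because it is smooth projective, the latter as a finite quotient of a smooth projective variety by a finite group), so $\mu_{Hdg}$ applied to their classes in $K_0(Var/\C)$ agrees with the actual class of their cohomology in $K_0(HS)$. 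Invoking the compatibility of $\mu_{Hdg}$ with symmetric powers noted earlier in the excerpt, the identity becomes
\[
[H^*(\Hilb{X}, \Q)] \= Sym^2[H^*(X, \Q)] + (\L + \L^2 + \cdots + \L^{d-1}) \cdot [H^*(X, \Q)]
\]
in $K_0(HS)$, where $Sym^2$ is taken in the super (Koszul) sense.

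Next I would observe that multiplication by $\L^i$ shifts Hodge bidegrees by $(i, i)$, so for $i \ge 1$ the $(p, 0)$-component of $\L^i \cdot [H^*(X, \Q)]$ vanishes identically. Consequently
\[
h^{p, 0}(\Hilb{X}) \= \bigl(Sym^2 H^*(X, \Q)\bigr)^{p, 0}
\]
for every $p \ge 0$, and it remains to extract the $(p, 0)$-part of the super-symmetric square for $p = 1$ and $p = 2$.

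For $(p, q) = (1, 0)$ the only contribution of this bidegree to $Sym^2 H^*(X, \Q)$ comes from pairing $H^{0, 0}(X)$ with $H^{1, 0}(X)$; the Koszul sign of the swap is trivial (one of the factors sits in even total degree), so the invariants form a copy of $H^{1, 0}(X)$. For $(p, q) = (2, 0)$ there are two sources: pairing $H^{0, 0}(X)$ with $H^{2, 0}(X)$ contributes a copy of $H^{2, 0}(X)$, while the pairing $H^{1, 0}(X) \otimes H^{1, 0}(X)$ acquires a Koszul sign $(-1)^{1 \cdot 1} = -1$ under the swap, so its invariants are the antisymmetric part $\Lambda^2 H^{1, 0}(X)$, of dimension $\binom{h^{1, 0}(X)}{2}$. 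Adding the two contributions gives the claimed formulae. The only real subtlety is this sign bookkeeping, and in particular the fact that the $H^{1, 0}(X)^{\otimes 2}$ contribution to $h^{2, 0}(\Hilb{X})$ is an exterior rather than an ordinary symmetric square.
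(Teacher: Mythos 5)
Your proof is correct and follows essentially the same route as the paper: the paper's own argument simply asserts the isomorphism $H^{p,0}(\Hilb{X}) \simeq \bigl(Sym^2(H^*(X,\Q))\bigr)^{p,0}$ and then reads off the $(1,0)$ and $(2,0)$ pieces, including the Koszul-sign point that the $H^{1,0}\otimes H^{1,0}$ contribution is $\Lambda^2 H^{1,0}(X)$, exactly as you do. The only difference is that you supply an explicit justification for that key isomorphism (via Corollary \ref{cor-HilbX}, the Hodge realization, and the vanishing of $(p,0)$-parts of Tate-twisted summands), which the paper leaves implicit; this is a welcome elaboration rather than a deviation.
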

\begin{proof}
We have $H^{p,0}(\Hilb{X}) \simeq
\Bigl(Sym^2(H^*(X))\Bigr)^{p,0}$.
In particular,
\[\bal
H^{1,0}(\Hilb{X}) \simeq \Bigl(Sym^2(H^*(X))\Bigr)^{1,0} &= H^{1,0}(X)   \\
H^{2,0}(\Hilb{X}) \simeq \Bigl(Sym^2(H^*(X))\Bigr)^{2,0} &=  H^{2,0}(X)  \oplus \Lambda^2 H^{1,0}(X). \\
\eal\]
\end{proof}

\section{The Fano variety of lines on a cubic}
\label{sec-fano}

\subsection{Definition and basic properties}
\label{subsec-fano-def}

In this section $k$ is an arbitrary field, $Y$ a cubic $d$-fold in $\P^{d+1} = \P(V)$, $\dim_k(V) = d+2$.
Let the equation of $Y$ be
\[
G \in \Gamma(\P^{d+1}, \OO(3)) = Sym^3(V^*).
\]
We allow $Y$ to have arbitrary singularities.

We consider the Grassmannian $Gr(2, V)$ of lines on $\P^{d+1}$ and its universal
rank two bundle $U \subset \OO_{Gr(2, d+2)} \otimes V$.
The section $G$ gives rise to a section $\wt{G} \in \Gamma(Gr(2,d+2),Sym^3(U^*))$.
One defines the Fano \emph{scheme} of lines on $Y$ as the zero locus of this
section
\be{Fano-scheme}
Z(\wt{G}) \subset Gr(2,V).
\ee

The Fano scheme could have non-reduced components (see \cite[Remark 1.20 (i)]{AK}).
In this paper we ignore the nonreduced structure of $Z(\wt{G})$ and let
\[
F(Y) := Z(\wt{G})_{red} \subset Gr(2, V)
\]
to be the Fano variety.

\medskip

$F(Y)$ is connected if $d \ge 3$.
If we don't assume $Y$ to be smooth, $F(Y)$ may be singular or reducible.
If $Y$ is smooth, then $F(Y)$ is smooth of dimension $2d-4$,
and $F(Y)$ is irreducible if $d \ge 3$.
This is a particular case of the following:

\begin{proposition}
If $Y$ is non-singular along a line $L \subset Y$, then
$L$ represents a smooth point of $F(Y)$ on an irreducible component
of codimension $2d-4$.
\end{proposition}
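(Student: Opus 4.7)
The plan is to identify the Fano scheme with the zero locus $Z(\wt{G})\subset Gr(2,V)$ of a section of the rank-$4$ bundle $Sym^3(U^*)$ on the $2d$-dimensional Grassmannian, and to show that the differential $d\wt{G}$ is surjective onto the fibre $Sym^3(L^*)$ at $[L]$. By the standard criterion for zero loci of sections of vector bundles, this surjectivity forces $Z(\wt{G})$ to be a local complete intersection smooth of the expected dimension $2d-4$ at $[L]$, and in particular reduced there, so the same conclusion passes to $F(Y)=Z(\wt{G})_{\mathrm{red}}$.

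First I would compute $d\wt{G}$ geometrically. Identifying $T_{[L]}Gr(2,V)=\mathrm{Hom}(L,V/L)$ and expanding $G$ along the first-order deformation $L_{\varepsilon}=\{v+\varepsilon\tilde\phi(v):v\in L\}$ of $L$ in the direction $\phi$ gives $d\wt{G}(\phi)(v)=3\,G(v,v,\tilde\phi(v))$, which one recognises as the map on global sections induced by the bundle morphism
\[
dG\colon N_{L/\P^{d+1}}=\OO_L(1)^d\longrightarrow\OO_L(3)=N_{Y/\P^{d+1}}|_L.
\]
Smoothness of $Y$ along $L$ says $dG|_p\ne 0$ for every $p\in L$, and since $dG$ vanishes automatically on $T_L$ (because $G|_L=0$), this promotes $dG$ to a surjection of bundles on $L$ and yields the normal bundle sequence
\[
0\to N_{L/Y}\to\OO_L(1)^d\to\OO_L(3)\to 0,
\]
with $N_{L/Y}$ locally free of rank $d-1$ on $L\cong\P^1$. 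In coordinates on $L$, the bundle map $\OO_L(1)^d\to\OO_L(3)$ is multiplication by a $d$-tuple $(g_1,\dots,g_d)\in H^0(\OO_L(2))^d$, namely the transverse partial derivatives of $G$ restricted to $L$; the smoothness assumption is equivalent to the $g_i$ having no common zero on $L$.

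What remains is to show that $d\wt{G}$ surjects on $H^0$, and this is the main obstacle: since $H^1(\OO_L(1)^d)=0$, surjectivity on $H^0$ is equivalent to $H^1(L,N_{L/Y})=0$, which does \emph{not} follow formally from the normal bundle sequence (subbundles of $\OO(1)^d$ on $\P^1$ can have summands of arbitrarily negative degree). Explicitly, the induced $H^0$-map is the multiplication
\[
H^0(\OO_L(1))^d\to H^0(\OO_L(3)),\qquad (h_1,\dots,h_d)\mapsto \sum_i g_i h_i,
\]
and I would prove its surjectivity by the following coprimality argument. Working over $\bar k$, a general pair $(g',g'')$ of linear combinations of the $g_i$ is coprime (because the $g_i$ have no common zero); then the restricted map $H^0(\OO(1))^2\to H^0(\OO(3))$, $(h',h'')\mapsto g'h'+g''h''$ is a linear map between $4$-dimensional spaces whose kernel vanishes by unique factorisation on $\P^1$ (a relation $g'h'=-g''h''$ with $\gcd(g',g'')=1$ would force $g''\mid h'$, impossible since $\deg g''=2>1=\deg h'$), hence it is an isomorphism. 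This yields the required surjectivity and completes the argument.
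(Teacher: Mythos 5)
Your proof is correct, and it is worth noting that the paper does not actually prove this proposition: its ``proof'' is the citation to Altman--Kleiman and Barth--Van de Ven, so you have supplied the standard argument that those references contain. Your route --- view the Fano scheme as $Z(\wt G)$ for the section $\wt G$ of the rank-$4$ bundle $Sym^3(U^*)$, identify $d\wt G$ at $[L]$ with the $H^0$-map of $N_{L/\P^{d+1}}=\OO_L(1)^{\oplus d}\to\OO_L(3)$ given by the transverse partials $g_i\in H^0(\OO_L(2))$, use smoothness of $Y$ along $L$ to see the $g_i$ have no common zero, and prove surjectivity on $H^0$ --- is exactly the expected-dimension/smoothness criterion, and your coprimality argument (a general pair $g',g''$ of combinations of the $g_i$ is coprime, and $(h',h'')\mapsto g'h'+g''h''$ is injective, hence bijective, between $4$-dimensional spaces) is complete and correct over $\bar k$, which suffices since surjectivity of a linear map may be checked after base change. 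Two small remarks. First, your parenthetical claim that $H^1(N_{L/Y})=0$ ``does not follow formally'' from the normal bundle sequence is not quite right: the kernel of a surjection $\OO_L(1)^{\oplus d}\twoheadrightarrow\OO_L(3)$ has rank $d-1$, degree $d-3$, and all summands of degree $\le 1$, so if some summand had degree $\le -2$ the remaining $d-2$ summands would have to have total degree $\ge d-1$, which is impossible; hence all summands have degree $\ge -1$ and $H^1$ vanishes for free, making your coprimality step an alternative rather than a necessity. Second, the section of the paper allows an arbitrary field, and the formula $d\wt G(\phi)(v)=3\,G(v,v,\tilde\phi(v))$ degenerates in characteristic $3$; your own reformulation via the partial derivatives $g_i=\partial G/\partial y_i|_L$ is the characteristic-free statement and should be taken as the definition of the differential. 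Finally, the ``codimension $2d-4$'' in the statement should be read as dimension $2d-4$, i.e.\ codimension $4$ in the $2d$-dimensional Grassmannian, which is precisely what you prove.
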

\begin{proof}
See \cite{AK, BV}. 
\end{proof}

Finally we recall that in the smooth case the canonical class of $F(Y)$ is
given by
\be{can-FY}
\omega_{F(Y)} = \OO(4-d)
\ee
where $\OO(1)$ is induced from the Pl\"ucker embedding
\[
F(Y) \subset Gr(2,d+2) \subset \P^{\binom{d+2}{2}-1}. 
\]
Thus we see that
$K_{F(Y)} \ge 0$ if and only if $d \le 4$.

\subsection{Decomposability}
\label{subsec-decomp}

In this section $k$ is a field of characteristic zero and $Y/k$ is a smooth cubic $d$-fold. 

For our study of rationality of cubics in Section \ref{sec-rational}
the following property of the Fano variety of lines will be relevant:

\begin{definition}\label{def-decomp} 
Let $W$ be an irreducible $2k$-dimensional variety. We call $W$ {decomposable} (resp. {stably decomposable})
if $W$ is birationally equivalent (resp. stably birationally equivalent) to either $V \times V'$ or $\Hilb{V}$
for some $k$-dimensional varieties $V$, $V'$.
\end{definition}

As $char(k) = 0$, we may resolve singularities and so we will assume that $V$ and $V'$ are smooth and projective.

If $W$ is not uniruled, then by Lemma \ref{lemma-mrc} stable decomposability is the same
as decomposability. This applies in particular to the Fano variety $F(Y)$ of
smooth cubic threefolds and fourfolds, as by (\ref{can-FY}) $F(Y)$ have non-negative
canonical class for $d=3,4$, and hence are not uniruled.

Furthermore in dimensions $d=3$ and $d=4$ we can effectively solve the question of decomposability.
Below we show that cubic threefolds have indecomposable Fano variety and
for cubic fourfolds $F(Y)$ can be only decomposed as $\Hilb{S}$ where $S$ is a $K3$ surface.
We rely in particular on the Hodge theoretic considerations, most notably
on the $\Psi$-polynomial (\ref{eq-psi}) which is non-zero in these dimensions.

For $d \ge 5$ stable decomposability and decomposability of the Fano variety of lines $F(Y)$
are potentially different as $F(Y)$ has negative canonical class, so in particular is uniruled and even
rationally connected. The $\Psi$-polynomial of $F(Y)$ is zero for $d \ge 5$,
so that Hodge numbers do not give any control on decomposability.

\begin{proposition}\label{prop-indec3}
For a smooth cubic threefold $Y/k$ the Fano variety $F(Y)$ is not (stably) decomposable.
\end{proposition}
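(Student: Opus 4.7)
The plan is to use the $\Psi$-polynomial as a stable birational invariant, complemented by a minimality argument. First, by (\ref{can-FY}) the canonical bundle $\omega_{F(Y)} = \OO_{F(Y)}(1)$ is ample, so $F(Y)$ is a minimal smooth projective surface of general type and in particular not uniruled. Lemma \ref{lemma-mrc} then identifies stable decomposability with decomposability for $F(Y)$, so it suffices to rule out a birational equivalence between $F(Y)$ and either a product $V \times V'$ of smooth projective curves or a symmetric square $\Hilb{V} = \Sym^2 V$.

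Since $\Psi_\L = 0$, the $\Psi$-polynomial factors through $K_0(\mathrm{Var}/k)/(\L)$ and hence is a stable birational invariant by Theorem \ref{thmLL}. The classical Clemens--Griffiths identification of the Albanese of $F(Y)$ with the five-dimensional intermediate Jacobian $J(Y)$ gives $h^{1,0}(F(Y)) = 5$, and $h^{2,0}(F(Y)) = \binom{5}{2} = 10$ is classical (and also follows from the forthcoming Theorem \ref{thm-hodge}, which identifies $H^2(F(Y),\Q)$ with $\Lambda^2 H^3(Y,\Q)(2)$); thus $\Psi_{F(Y)}(t) = 1 + 5t + 10 t^2$. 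For $V \times V'$ with $g = g(V), g' = g(V')$, matching against $\Psi_{V \times V'}(t) = (1+gt)(1+g't)$ forces $g + g' = 5$ and $gg' = 10$, whose discriminant $25 - 40 < 0$ admits no non-negative integer solution, ruling this case out.

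The subtler case is $\Hilb{V}$: Lemma \ref{lemma-psi-hilb} gives $\Psi_{\Hilb{V}}(t) = 1 + gt + \binom{g}{2}t^2$, which accidentally coincides with $\Psi_{F(Y)}$ exactly when $g(V)=5$. I would close this case by observing that $\Sym^2 V$ is itself minimal of general type for $g \ge 3$: rational curves in $\Sym^2 V$ correspond to $g^1_2$'s on $V$, so at most one exists (for $V$ hyperelliptic), and it has self-intersection $1 - g \le -2$, never $-1$. Consequently a birational equivalence between the minimal general-type surfaces $F(Y)$ and $\Hilb{V}$ must in fact be an isomorphism. But the topological Euler characteristic then distinguishes them: applying $\chi$ to the forthcoming $Y$-$F(Y)$ relation (Theorem \ref{main-thm1}) together with Corollary \ref{cor-HilbX} yields $\chi_{top}(F(Y)) = 27$, whereas $\chi_{top}(\Hilb{V}) = \binom{\chi(V)+1}{2} = 28$ for $g(V)=5$, a contradiction.

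The main obstacle is precisely this near miss in the $\Hilb{V}$ case: the stable birational invariants do not separate $F(Y)$ from $\Sym^2 V$ at $g = 5$, so one must escalate stable birationality to strict birationality (Lemma \ref{lemma-mrc}), then to isomorphism (minimality of both sides), and finally use $\chi_{top}$, which is not a birational invariant but matches once isomorphism is in hand. The one non-trivial classical ingredient required is the minimality of $\Sym^2 V$ for $g(V) \ge 3$.
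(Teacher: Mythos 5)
Your proposal is correct and follows essentially the same route as the paper: reduce to birational (in)decomposability via non-uniruledness and Lemma \ref{lemma-mrc}, use $\Psi_{F(Y)}(t)=1+5t+10t^2$ to exclude products of curves and to force $g(C)=5$ in the symmetric-square case, and then use minimality of $\Sym{C}$ (the content of Lemma \ref{lemma-SymC}) to turn the birational equivalence into a contradiction with a numerical invariant. The only (inessential) difference is the last step: you upgrade to an isomorphism using minimality of both surfaces and compare $\chi_{top}$ ($27$ vs.\ $28$), whereas the paper compares $h^{1,1}$ ($25$ vs.\ $26$) via uniqueness of minimal models --- the same discrepancy in disguise, so both conclude identically.
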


\begin{proof}
It is sufficient to prove the claim over the algebraic closure of $k$.
By the Lefschetz principle we may assume $k = \C$.

We need to show that the surface $F(Y)$ is not birationally equivalent
to $C \times C'$ or $\Sym{C}$ where $C$ and $C'$ are smooth projective curves.

The $\Psi$-polynomial (\ref{eq-psi}) of the Fano surface $F(Y)$ 
is
\[
\Psi_{F(Y)}(t) = 1 + 5t + 10t^2 
\]
(see Example \ref{ex-F3}).

On the other hand the $\Psi$-polynomial of $C \times C'$ is equal to
\[
(1+g(C)t)(1+g(C')t). 
\]
As $\Psi_{F(Y)}(t)$ does not admit a non-trivial integer (or even real) factorization,
$F(Y)$ is not birational to $C \times C'$.

By Lemma \ref{lemma-psi-hilb}, the $\Psi$-polynomial of $\Sym{C}$ is 
\[
1 + g(C)\,t + \frac{g(C)(g(C)-1)}{2}\,t^2,
\]
thus if $F(Y)$ is birational to $\Sym{C}$, then $g(C) = 5$.

By Lemma \ref{lemma-SymC}, $\Sym{C}$ is a minimal surface.
Minimal models for surfaces of general type are unique,
so that if $F(Y)$ and $\Sym{C}$ are birationally equivalent, then there exists a morphism
\[
F(Y) \to \Sym{C} 
\]
which is a composition of contractions of $(-1)$-curves, and this can
only happen if
\[
h^{1,1}(F(Y)) > h^{1,1}(\Sym{C}).
\]

This is a contradiction since $h^{1,1}(F(Y)) = 25$ (Example \ref{ex-F3}), 
$h^{1,1}(\Sym{C}) = g(C)^2 + 1 = 26$ (this can be shown as in the proof of Lemma \ref{lemma-psi-hilb}).
\end{proof}

\begin{lemma}\label{lemma-SymC}
Let $C$ be a complex smooth projective curve of genus $g > 0$.
If $\Gamma \subset \Sym{C}$ is a smooth rational curve, 
then
\[
\deg \Gamma^2 = 1-g. 
\]
In particular $\Sym{C}$ does not contain $(-1)$-curves unless $g=2$.
\end{lemma}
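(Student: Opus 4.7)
The plan is to first observe that a smooth rational curve $\Gamma \subset \Sym{C}$ must be a complete pencil of degree two on $C$ (a $g^1_2$), and then compute its self-intersection by pulling back to $C \times C$ and applying adjunction on the lifted curve. The hypothesis $g > 0$ enters through the non-triviality of $J(C)$.

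For the first step, fix a base point and consider the Abel--Jacobi map $a \colon \Sym{C} \to J(C)$. Since $J(C)$ is an abelian variety it contains no rational curves, so $a(\Gamma)$ collapses to a point, and $\Gamma$ lies inside a fibre of $a$, i.e.\ a complete linear system $|D|$ for some effective divisor $D$ of degree $2$. Clifford's inequality (for $g \ge 2$) or Riemann--Roch (for $g = 1$) gives $h^0(D) \le 2$, so $\dim|D| \le 1$, and since $\Gamma$ is a curve contained in $|D|$ we must have $\Gamma = |D| \cong \P^1$. In particular the pencil $\Gamma$ is base-point-free (otherwise it would reduce to a $g^1_1$ or $g^1_0$ on $C$, impossible for $g > 0$) and so corresponds to a degree two morphism $\phi \colon C \to \P^1$; let $\iota$ denote its deck involution.

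For the second step, let $s \colon C \times C \to \Sym{C}$ be the quotient by $\Sigma_2$, and set $\wt{\Gamma} = s^{-1}(\Gamma)$. By construction the points of $\wt{\Gamma}$ are exactly the pairs $(p,\iota(p))$, so $\wt{\Gamma}$ is the graph of $\iota$, hence isomorphic to $C$, and $s$ restricts to a degree two cover $\wt{\Gamma} \to \Gamma$. This forces the scheme-theoretic equality $s^*\Gamma = \wt{\Gamma}$. On $C \times C$ one has $K_{C \times C} = (2g-2)(F_1 + F_2)$, where $F_i$ is a fibre of the $i$-th projection, and each projection restricts to an isomorphism $\wt{\Gamma} \to C$, so $\wt{\Gamma} \cdot F_i = 1$. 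Adjunction applied to $\wt{\Gamma} \simeq C$ therefore gives
\[
\wt{\Gamma}^2 \= 2g(\wt{\Gamma}) - 2 - \wt{\Gamma} \cdot K_{C \times C} \= (2g-2) - 4(g-1) \= 2 - 2g.
\]
Since $s$ is finite of degree two, $\wt{\Gamma}^2 = (s^*\Gamma)^2 = 2\,\Gamma^2$, whence $\Gamma^2 = 1 - g$, proving the main statement. The last assertion is immediate since $\Gamma^2 = -1$ forces $g = 2$.

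The only slightly subtle point is the scheme-theoretic identity $s^*\Gamma = \wt{\Gamma}$, which is safe because $\wt{\Gamma}$ is reduced (it is isomorphic to the smooth curve $C$) and the cover $s \colon \wt{\Gamma} \to \Gamma$ has degree two, matching the degree of $s$ itself. After that, the computation is standard adjunction, and the real conceptual content is concentrated in the preliminary reduction via the Abel--Jacobi map to the case of a complete $g^1_2$.
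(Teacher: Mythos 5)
Your proof is correct and follows essentially the same route as the paper: the Abel--Jacobi map reduces $\Gamma$ to the pencil of a degree two covering $C \to \P^1$, and the self-intersection is computed by pulling back to $C \times C$, where $\deg(s^*\Gamma)^2 = 2\deg\Gamma^2$. The only (harmless) variation is at the last step: you compute $\wt{\Gamma}^2 = 2-2g$ by adjunction on $C\times C$ using $\wt{\Gamma}\cdot F_i = 1$ and $g(\wt{\Gamma})=g$, whereas the paper observes that $\wt{\Gamma}$ is the image of the diagonal under $\mathrm{id}\times\tau$ and quotes $\deg\Delta^2 = 2-2g$.
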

\begin{proof}
In what follows we identify points of $\Sym{C}$ with effective degree two divisors
on $C$, in particular $\Gamma$ is a $1$-parameter family of such divisors.

We first show that $\Gamma$ parametrizes fibers of a degree $2$ covering $C \to \P^1$
so that $C$ is necessarily a hyperelliptic curve.

Fix a point $c \in C$. The Abel-Jacobi map 
\[\bal
\Sym{C} &\to Jac(C)  \\
c_1 + c_2 &\mapsto \OO(c_1 + c_2 - 2c)
\eal\]
contracts the rational curve $\Gamma$.
Hence all degree two divisors parametrized by $\Gamma$ are rationally equivalent.
Let $\LL$ denote the corresponding complete linear system.

We have $\Gamma \subset \LL$ and it is easy to see that in fact $\Gamma = \LL$: otherwise the surface 
$\Sym{C}$ would contain a linear projective subspace
$|\LL|$ of dimension $> 1$. Finally, $\LL$ has no fixed components: if 
$c_1 + c_2$ and $c_1' + c_2'$ are rationally equivalent divisors and $c_1 = c_1'$, 
then two points $c_2$ and $c_2'$ are rationally equivalent; since $g(C) > 0$ 
this can only happen if $c_2 = c_2'$.

Thus we have shown that $\Gamma$ corresponds to a complete linear system of degree $2$ and dimension $1$,
which gives rise to a $2:1$ covering $C \to \P^1$ and the associated hyperelliptic involution $\tau: C \to C$.
In these terms:
\[
\Gamma = \bigl\{ \{ x, \tau(x)\}, x \in C \bigr\}. 
\]

\medskip

Let $\pi: C^2 \to \Sym{C}$ denote the natural degree two covering.
Consider the preimage $\wt{\Gamma} = \pi^*(\Gamma)$ of $\Gamma$ in $C^2$.
Using the projection formula and the fact that 
$\pi^*$ is multiplicative we get
\[
2 \cdot \Gamma^2 = \pi_* \pi^*(\Gamma^2) = \pi_* \wt{\Gamma}^2,
\]
and after taking degrees we obtain
\be{pi-1}
\deg \Gamma^2 = \frac12 \deg \wt{\Gamma}^2.
\ee

As $\wt{\Gamma} = \{(x, \tau(x)): x \in C\}$ is the image of the diagonal $\Delta \subset C^2$
under the automorphism $id \times \tau$, we have
\be{pi-2}
\deg \wt{\Gamma}^2 = \deg \Delta^2 = \deg c_1(C) = 2-2g.
\ee

The claim follows from (\ref{pi-1}) and (\ref{pi-2}).

\end{proof}

\subsection{Decomposability and the associated $K3$ surface for cubic fourfolds}
\label{subsec-decomp4}

\begin{proposition} \label{prop-indec4} 
If $Y/k$ is a smooth cubic fourfold, and $F(Y)$ is (stably) decomposable, then $F(Y)$ is birationally equivalent to a Hilbert scheme of two points
on a $K3$ surface.
\end{proposition}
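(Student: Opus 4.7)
The plan is to leverage that $F(Y)$ has trivial canonical class: by (\ref{can-FY}), $\omega_{F(Y)} = \OO(4-d)|_{F(Y)} = \OO_{F(Y)}$, so $F(Y)$ is not uniruled, and Lemma \ref{lemma-mrc} upgrades stable decomposability to genuine decomposability. It thus suffices to treat the two cases where $F(Y)$ is birationally equivalent to either $V \times V'$ or $\Hilb{V}$ for smooth projective surfaces $V, V'$, and in the Hilbert scheme case to show that $V$ is birational to a K3 surface.

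To rule out the product case I would appeal to the (stably) birational invariant $\Psi_X(t) = \sum_p h^{p,0}(X)t^p$. Since $F(Y)$ is an irreducible holomorphic symplectic fourfold of $K3^{[2]}$-type \cite{BD}, one has $h^{p,0}(F(Y)) = 1$ for $p \in \{0,2,4\}$ and $0$ otherwise, so
\[
\Psi_{F(Y)}(t) = 1 + t^2 + t^4.
\]
Writing $a, b$ for $h^{1,0}(V), h^{2,0}(V)$ and $a', b'$ for $h^{1,0}(V'), h^{2,0}(V')$, one has
\[
\Psi_{V \times V'}(t) = (1 + at + bt^2)(1 + a't + b't^2),
\]
and matching coefficients with $1 + t^2 + t^4$ forces $a = a' = 0$, $b + b' = 1$, $bb' = 1$, which has no solution in non-negative integers. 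Hence this case is impossible, and $F(Y)$ must be birational to some $\Hilb{V}$.

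To finish, Lemma \ref{lemma-psi-hilb} combined with the birational invariance of $h^{p,0}$ gives $h^{1,0}(V) = 0$ and $h^{2,0}(V) = 1$. To pin down the Kodaira dimension of $V$, I would use that $\Hilb{V}$ is birational to the finite quotient $\Sym{V} = V^2/(\Z/2)$, so that $\kappa(\Hilb{V}) = \kappa(V^2) = 2\kappa(V)$; combined with $\kappa(\Hilb{V}) = \kappa(F(Y)) = 0$, this forces $\kappa(V) = 0$. By the Enriques-Kodaira classification of minimal surfaces with $\kappa = 0$, the only one with $q = 0$ and $p_g = 1$ is a K3 surface, so the minimal model $S$ of $V$ is a K3 surface, and $\Hilb{V}$ is birational to $\Hilb{S}$, since a birational map of smooth projective surfaces induces a birational map between the corresponding Hilbert schemes of two points.

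The most delicate ingredient is the Kodaira-dimension formula $\kappa(\Hilb{V}) = 2\kappa(V)$ together with the appeal to the Enriques-Kodaira classification; the remaining steps are either the elementary polynomial matching above or facts already recorded in the paper.
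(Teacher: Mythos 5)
Your proposal follows the paper's own proof very closely: the reduction from stable decomposability to decomposability via non-uniruledness of $F(Y)$ and Lemma \ref{lemma-mrc} is exactly the remark the paper makes before Proposition \ref{prop-indec3}; the $\Psi$-polynomial $\Psi_{F(Y)}(t)=1+t^2+t^4$ rules out the product case and, via Lemma \ref{lemma-psi-hilb}, forces $q=0$, $p_g=1$ in the Hilbert-scheme case; and the Enriques--Kodaira classification of minimal surfaces with $\kappa=0$ then leaves only a $K3$. Your coefficient matching in the product case is a correct (slightly more explicit) version of the paper's ``no nontrivial factorization'' remark, and the final observation that a birational map of smooth surfaces induces a birational map of their Hilbert squares is the implicit content of the paper's ``replace $S$ by its minimal model''.

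The one step that needs repair is the justification of $\kappa(\Hilb{V})=2\kappa(V)$. The inference ``$\Hilb{V}$ is birational to the finite quotient $V^2/(\Z/2)$, so $\kappa(\Hilb{V})=\kappa(V^2)$'' is not valid as a general principle: finite quotients can drop Kodaira dimension. For instance, for a genus two curve $C$ the quotient $\Sym{C}=C^2/(\Z/2)$ is birational to the Jacobian, so $\kappa(\Sym{C})=0$ while $\kappa(C^2)=2$; similarly $\Sym{E}$ of an elliptic curve is ruled although $\kappa(E^2)=0$. What saves the claim for surfaces $V$ is that the diagonal in $V\times V$ has codimension two, so the quotient map $V^2\to\Sym{V}$ is \'etale in codimension one (no ramification divisor, hence $K_{V^2}=\pi^*K_{\Sym{V}}$) and the singularities of $\Sym{V}$ along the diagonal are canonical (transverse $A_1$); pluricanonical forms on a resolution --- e.g.\ on $\Hilb{V}$, which is a crepant resolution --- are then identified with invariant pluricanonical forms on $V^2$, giving $\kappa(\Hilb{V})=\kappa(V^2)=2\kappa(V)$. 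This is precisely the point at which the paper invokes \cite[Corollary 1]{AA}; with that citation, or with the codimension-two argument just sketched, your proof is complete and coincides with the paper's.
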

\begin{proof}
The proof is similar to that of Proposition \ref{prop-indec3}.
The $\Psi$-polynomial of the Fano variety is given by:
\[
\Psi_{F(Y)}(t) = 1 + t^2 + t^4
\]
(see Example \ref{ex-F4}).

If $F(Y)$ is birationally equivalent to a product of two surfaces $S$,$S'$, then
\[
1 + t^2 + t^4 = \Psi_S(t) \cdot \Psi_{S'}(t),
\]
$\deg \Psi_S(t), \deg \Psi_{S'}(t) \le 2$.

However, as $1 + t^2 + t^4$ does not admit a non-trivial factorization
into a product of two polynomials with positive integer coefficients, 
such decomposition is not possible.
 
Assume now that $F(Y)$ is birationally equivalent to $\Hilb{S}$ for a
smooth projective surface $S$. 
Let $q = h^{1,0}(S)$, $p_g = h^{2,0}(S)$.
By Lemma \ref{lemma-psi-hilb} for $\Psi_{\Hilb{S}}$ to match $\Psi_{F(Y)}$ we must have $q = 0$, $p_g = 1$.

It can be proved directly or applying \cite[Corollary 1]{AA}, that
$S$ has Kodaira dimension $\kappa_S = 0$.
We may replace $S$ by its minimal model; so we assume $S$ is a minimal surface.
Thus by the Enriques-Kodaira classification of surfaces $S$ can be a $K3$, an abelian surface,
an Enriques surface or a hyperelliptic surface.
Among these four types of surfaces $q=0$, $p_g = 1$ only holds for a $K3$ surface.
\end{proof}

The following two definitions are given by Hassett \cite{Has2}:

\begin{definition}
A smooth complex cubic fourfold $Y$ is called special if the rank of the sublattice of algebraic
cycles in $H^4(Y, \Z)$ is at least two.
\end{definition}

Note that the sublattice of algebraic classes in $H^4(Y, \Z)$ coincides
with the Hodge lattice $H^{2,2}(Y, \C) \cap H^4(Y,\Z)$ since the integral Hodge conjecture
is known for cubic fourfolds \cite{Z,Mur77,V}.

\begin{definition}\label{def-assK3}
A polarized $K3$ surface $(S, H)$ is associated to $Y$ if for some algebraic cycle $T \in H^4(Y, \Z)$ we have
a Hodge isometry between primitive Hodge lattices
\[
\left<h^2, T\right>^\perp \subset H^4(Y, \Z)(1) 
\]
and
\[
\left<H\right>^\perp \subset H^2(S, \Z)
\]
with its Beauville-Bogomolov form \cite{B}.
\end{definition}

\begin{proposition}\label{prop-spec4}
Let $Y/\C$ be a smooth cubic fourfold.
If the Fano variety $F(Y)$ is decomposable, then $Y$ is special and
the $K3$ surface from Proposition \ref{prop-indec4} is associated to $Y$ in terms of Hodge structure.
\end{proposition}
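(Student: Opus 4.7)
The plan is to combine the birational equivalence $F(Y) \sim_{bir} \Hilb{S}$ supplied by Proposition \ref{prop-indec4} with the Beauville-Donagi Hodge-theoretic description of $F(Y)$ to transport Hodge information from $\Hilb{S}$ back to $Y$.

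First I would invoke Huybrechts' theorem that birationally equivalent smooth projective irreducible holomorphic symplectic manifolds have Hodge-isometric second integral cohomology with respect to the Beauville-Bogomolov form. Since $F(Y)$ and $\Hilb{S}$ are both such fourfolds, this furnishes an integral Hodge isometry
\[
\phi : H^2(F(Y), \Z) \xrightarrow{\sim} H^2(\Hilb{S}, \Z) \cong H^2(S, \Z) \oplus \Z \delta,
\]
where $\Z\delta$ is the orthogonal complement of $H^2(S, \Z)$ with $\delta^2 = -2$. Next I would apply the Beauville--Donagi theorem, which realises $H^2(F(Y), \Z)$ as $H^4(Y, \Z)(1)$ at the level of Hodge structures, matching the Pl\"ucker class $g$ with $h^2$ and the BB form with the intersection form up to a universal scalar. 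Composing with $\phi$ produces a Hodge isomorphism
\[
H^4(Y, \Z)(1) \xrightarrow{\sim} H^2(S, \Z) \oplus \Z\delta
\]
under which $h^2$ corresponds to a class $g_S + n\delta$ with $g_S \in H^2(S, \Z)$ of positive square (one computes $g_S^2 = 6 + 2n^2$ from $g^2 = 6$).

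To see that $Y$ is special, I would let $T \in H^4(Y, \Z)$ be the preimage of $\delta$. Because $\delta$ is a divisor class on $\Hilb{S}$, $T$ is a Hodge class, hence algebraic by the integral Hodge conjecture for cubic fourfolds cited after Definition \ref{def-assK3}; linear independence of $T$ and $h^2$ is immediate since $\delta$ and $g_S + n\delta$ are linearly independent in $H^2(S, \Z) \oplus \Z\delta$. To exhibit $S$ as an associated $K3$, I would take $H = g_S$ and compute directly that
\[
\langle h^2, T\rangle^\perp \subset H^4(Y, \Z)(1)
\]
is sent by the composite isometry to
\[
\{\, x \in H^2(S, \Z) \oplus \Z\delta : x \cdot \delta = 0, \; x \cdot (g_S + n\delta) = 0 \,\} = \langle g_S\rangle^\perp \subset H^2(S, \Z),
\]
which is precisely $\langle H\rangle^\perp$.

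The main obstacle is to carry out this programme consistently with the strict integral and isometric requirements of Definition \ref{def-assK3}. On the one hand, the Beauville--Donagi map is an isometry only up to a universal scalar (the intersection form on $H^4(Y)$ and the BB form on $H^2(F(Y))$ differ by a factor of $2$), which may force a rational rather than integral statement and require a small saturation argument at the lattice level. On the other hand, one must verify that $g_S$ can be realised by an ample divisor on $S$, i.e. is a genuine polarization; since $g_S^2 > 0$, a standard manipulation using reflections in $(-2)$-classes together with surjectivity of the K3 period map should achieve this, possibly after replacing $S$ by a birational (hence isomorphic) K3 surface.
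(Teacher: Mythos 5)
The paper's own proof is a one-line citation: Proposition \ref{prop-indec4} gives $F(Y)$ birational to $\Hilb{S}$, and the statement is then exactly one direction of Addington's Theorem 2 \cite{Ad}. You are instead reproving that direction by hand, via Huybrechts' theorem on birational hyperk\"ahler manifolds together with the Beauville--Donagi isomorphism $H^4(Y,\Z)\simeq H^2(F(Y),\Z)$; this is a legitimate, more self-contained route, and your computation that the classes orthogonal to both $\delta$ and $g_S+n\delta$ form exactly $\langle g_S\rangle^\perp\subset H^2(S,\Z)$ is the right one, as is $g_S^2=6+2n^2$. Two points of bookkeeping, though. The Abel--Jacobi map is not an isometry of the full lattices in any normalization ($H^4(Y,\Z)$ is unimodular, while $H^2$ of a $\Hilb{}$-type fourfold has discriminant $2$); by \cite{BD,Has2} it is an isometry up to sign, with no factor of $2$, between the primitive sublattices $\langle h^2\rangle^\perp$ and $\langle g\rangle^\perp$ only. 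So the argument must be run inside the primitive parts; in particular, since $T=\psi^{-1}(\delta)$ is not primitive when $n\neq0$ (as $\delta\cdot(g_S+n\delta)=-2n$), the equivalence ``$x\cdot T=0$ iff $\psi(x)\cdot\delta=0$'' for $x\in\langle h^2\rangle^\perp$ needs a short extra argument (write $\delta$ rationally as a multiple of $\psi(h^2)$ plus a class orthogonal to it). This is fixable and is the correct version of your ``saturation'' worry. The specialness half (integrality and Hodge type of $T$, algebraicity via the integral Hodge conjecture for cubic fourfolds, independence from $h^2$) is fine.

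The genuine gap is the final step, ampleness of $g_S$. Composing with $\pm1$ and reflections in $(-2)$-classes only moves $g_S$ into the closure of the ample cone; you must still rule out that the resulting class is nef but not ample, i.e.\ orthogonal to a $(-2)$-curve class, and neither surjectivity of the period map nor replacing $S$ by a birational K3 helps here ($S$ is already given, and birational K3 surfaces are isomorphic). The standard way to exclude the wall case is lattice-theoretic: an algebraic $(-2)$-class in $\langle H\rangle^\perp$ would transport, through your anti-isometry, to an algebraic class $T'$ of square $2$ in $\langle h^2\rangle^\perp\subset H^4(Y,\Z)$, hence to a labelling $\langle h^2,T'\rangle$ of discriminant $6$; by Hassett \cite{Has2} a smooth cubic fourfold admits no labelling of discriminant $2$ or $6$, so this cannot occur. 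Without this (or an equivalent) argument the ``standard manipulation with reflections'' does not close the proof; alternatively one can, as the paper does, simply quote \cite[Theorem 2]{Ad}, whose proof handles precisely these lattice subtleties.
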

\begin{proof}

By Proposition \ref{prop-indec4}, $F(Y)$ is birationally equivalent to $\Hilb{S}$ for
a $K3$ surface $S$.
Now the result follows e.g. from \cite[Theorem 2]{Ad}.

%
\end{proof}

\begin{remark} The condition of decomposability of $F(Y)$ is strictly stronger
than the condition of $Y$ having an associated $K3$ surface \cite{Ad}. 
Seventy-four is the smallest discriminant for which a special cubic fourfold has an associated $K3$ surface
but $F(Y)$ is generically
not birational to a Hilbert scheme \cite{Ad}.
\end{remark}

\begin{corollary}\label{cor-indec4}
For a very general smooth cubic fourfold $Y/\C$ the Fano variety $F(Y)$ is not decomposable.
\end{corollary}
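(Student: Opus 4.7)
The plan is to combine Proposition \ref{prop-spec4} with the well-known fact that being special is a codimension-one condition on the moduli space of smooth cubic fourfolds, so that a very general cubic fourfold fails to be special.

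First, I would argue by contrapositive: suppose $F(Y)$ is decomposable. Then Proposition \ref{prop-spec4} applies and tells us two things simultaneously — $Y$ is a special cubic fourfold in the sense of Hassett, and moreover $Y$ admits an associated $K3$ surface. For the purpose of this corollary we only need the first conclusion, namely that $Y$ is special. Thus the set of cubic fourfolds $Y$ with decomposable $F(Y)$ is contained in the locus of special cubic fourfolds.

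Next, I would invoke the structural result of Hassett \cite{Has2} recalled in the introduction: special cubic fourfolds form a countable union of divisors $\CC_d \subset \MM$ inside the twenty-dimensional coarse moduli space $\MM$ of smooth cubic fourfolds. Equivalently, the locus where the rank of the algebraic sublattice of $H^4(Y,\Z)$ jumps from one to at least two is a countable union of proper closed analytic subsets of $\MM$, which is precisely what "very general" is designed to exclude (cf.\ \cite{Zar, Has2}). Therefore, outside this countable union of divisors, $Y$ is not special.

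Combining the two steps: a very general smooth complex cubic fourfold $Y$ lies in the complement of the Hassett divisors, and hence cannot have a decomposable Fano variety of lines by the contrapositive of Proposition \ref{prop-spec4}. The only mildly delicate point is ensuring that "very general" is interpreted consistently (avoiding a countable union of proper closed subsets of $\MM$), but this is exactly the standard meaning and matches the context in which Hassett's classification is formulated, so no further work is required.
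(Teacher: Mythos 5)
Your argument is correct and coincides with the paper's: the paper's one-line proof likewise combines Proposition \ref{prop-spec4} (decomposability of $F(Y)$ forces $Y$ to be special) with Hassett's result that special cubic fourfolds form a countable union of divisors in the moduli space, so a very general $Y$ avoids them. Your write-up just spells out the contrapositive step and the meaning of ``very general'' more explicitly.
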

\begin{proof}
Special cubic fourfolds form a countable union of divisors in the moduli space of all cubics \cite{Has2}.
\end{proof}

\section{The $Y$-$F(Y)$ relation}
\label{sec-formula}

\subsection{The relation in $K_0(Var/k)$}

\begin{theorem} \label{main-thm1} Let $Y$ be a reduced cubic hypersurface of dimension $d$.
We have the following relations in $K_0(Var/k)$:
\be{main-formula-hilb}
[\Hilb{Y}] = [\P^d] [Y] + \L^2 [F(Y)]
\ee
\be{main-formula-sym}
[\Sym{Y}] = (1+\L^d) [Y] + \L^2 [F(Y)] - \L^d[Sing(Y)] 
\ee
where $Sing(Y)$ is the singular locus of $Y$.
\end{theorem}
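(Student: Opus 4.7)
The plan is to compute $[\Hilb{Y}]$ from two different stratifications and equate them. The underlying geometric content is the one indicated in the introduction: a line $L \subset \P^{d+1}$ meets the cubic $Y$ either along all of $L$ (when $L \subset Y$) or in a length-$3$ subscheme, in which case any two of the three intersection points determine the third; the locus where this determination fails is precisely $F(Y)$.

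To formalize this, I would introduce the incidence variety
\[
W \;=\; \{(y,L) \in Y \times Gr(2,V) : y \in L\}.
\]
The projection $W \to Y$ is a Zariski-locally trivial $\P^d$-bundle (the lines through a point in $\P^{d+1}$ form a $\P^d$), so by (\ref{proj-bun}), $[W] = [\P^d] \cdot [Y]$. I would then split $W = W_1 \sqcup W_2$ according to whether $L \subset Y$ or not. The piece $W_1$ maps to $F(Y)$ by $(y,L)\mapsto L$, and realises $W_1$ as the restriction to $F(Y)$ of the universal line $\mathcal{L}=\P(U)\to Gr(2,V)$; this is a Zariski-locally trivial $\P^1$-bundle, giving $[W_1] = (1+\L)[F(Y)]$.

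The crucial step is to construct a morphism $\psi: W_2 \to \Hilb{Y}$ via residual intersection. For a point $(y,L) \in W_2$, the subscheme $L \cap Y$ has length $3$ in $L$ and contains $y$, and I set $\psi(y,L) = (L\cap Y) - y$, the residual length-$2$ subscheme. Globally $\psi$ arises from the universal length-$3$ family $\mathcal{L}\cap Y$ over $Gr(2,V)\setminus F(Y)$ together with the tautological section $W_2 \hookrightarrow \mathcal{L}\cap Y$, by a relative Hilbert scheme construction. I would then verify that $\psi$ is an isomorphism onto the open subvariety $B := \{Z \in \Hilb{Y} : L_Z \not\subset Y\}$, where $L_Z \in Gr(2,V)$ is the unique line spanned by $Z$; the inverse is $Z \mapsto ((L_Z\cap Y) - Z,\, L_Z)$, which makes sense because $Z \subset L_Z$ always. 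The complement $A := \Hilb{Y}\setminus B$ consists of $Z$ with $L_Z \subset Y$, and sending $Z \mapsto L_Z$ identifies $A$ with the relative Hilbert scheme $\mathrm{Hilb}_2(\mathcal{L}/F(Y))$. Since $\mathrm{Hilb}_2(\P^1) = \P^2$ and $\mathcal{L}|_{F(Y)}$ is a Zariski-locally trivial $\P^1$-bundle, $A$ is a Zariski-locally trivial $\P^2$-bundle over $F(Y)$, so $[A] = (1+\L+\L^2)[F(Y)]$.

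Assembling the pieces gives
\[
[\Hilb{Y}] \;=\; [A] + [B] \;=\; (1+\L+\L^2)[F(Y)] + [\P^d][Y] - (1+\L)[F(Y)] \;=\; [\P^d][Y] + \L^2 [F(Y)],
\]
which is (\ref{main-formula-hilb}). For (\ref{main-formula-sym}) I would simply substitute into the hypersurface formula (\ref{sym-hilb}) of Corollary \ref{cor-HilbX} and simplify, using the identity $[\P^d]-[\P^{d-1}]+1 = \L^d+1$. The main technical obstacle is the scheme-theoretic construction of $\psi$ and its inverse: one must treat uniformly the cases where $L$ is tangent to $Y$ (so $L\cap Y$ is non-reduced) or $y$ is a singular point of $Y$, and verify that the residual operation extends as a morphism in families. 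Packaging this via the relative Hilbert scheme of the universal intersection $\mathcal{L}\cap Y$ over $Gr(2,V)\setminus F(Y)$ is what should make this step transparent.
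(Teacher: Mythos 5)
Your proposal is correct and follows essentially the same route as the paper: the same incidence variety $W$ with $[W]=[\P^d][Y]$, the same residual-intersection isomorphism between the open loci $\{Z: L_Z\not\subset Y\}\subset \Hilb{Y}$ and $\{(y,L): L\not\subset Y\}\subset W$, the same identification of the closed complements as $\P^2$- and $\P^1$-bundles over $F(Y)$, and the same use of Corollary \ref{cor-HilbX} for the second formula. The only difference is cosmetic: you spell out the family/relative-Hilbert-scheme justification of the residual construction a bit more explicitly than the paper, which states the birational correspondence directly.
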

\begin{proof}
We first note that (\ref{main-formula-sym}) follows from (\ref{main-formula-hilb}) using
the formula (\ref{sym-hilb}) of Corollary \ref{cor-HilbX}.

Let us now prove (\ref{main-formula-hilb}).
Consider the incidence variety 
\[
W := \{(x \in L): L \subset \P^{d+1}, x \in Y\}.
\]
In other words $W$ is the projectivization of the vector bundle
$T_{\P^{d+1}}\big|_{Y}$ on $Y$.

Let 
\be{phi}
\phi: \Hilb{Y} \dashrightarrow W := \{(x \in L): L \subset \P^{d+1}, x \in Y\}
\ee
%
be a rational morphism which is defined as follows.
A point $\tau \in \Hilb{Y}$ corresponds to a length $2$ subscheme of $Y$:
$\tau$ can be a pair of $k$-rational points,
a $k$-rational point with a tangent direction, or a pair of Galois conjugate points.

In any case there is a unique $k$-rational line $L = L_\tau$ passing through $\tau$.
For general $\tau$, the intersection $\xi = L \cap Y$ is a length $3$ scheme and there is
a third $k$-rational intersection point $x \in L \cap Y$.
We define $\phi(\tau) := (x \in L_\tau)$.

In fact $\phi$ is a birational isomorphism and $\phi^{-1}$ is defined by
mapping $(x \in L)$ to the subscheme of length $2$ obtained as the residual 
intersection of $L$ with $Y$.

\medskip

The morphism $\phi$ fits into the following diagram
\[\xymatrix{
U \ar@{^{(}->}[d] \ar[rr]^\simeq & & U' \ar@{^{(}->}[d] \\
\Hilb{Y}  \ar@{-->}[rr]^\phi & & W  \\
Z \ar@{^{(}->}[u] \ar[dr]^q & & Z' \ar@{^{(}->}[u] \ar[dl]_{q'} \\
& F(Y) &
}\]

Here $Z \subset \Hilb{Y}$ is the closed subvariety consisting of those $\tau \in \Hilb{Y}$
that the corresponding line $L_\tau$ is contained in $Y$
and $Z' \subset W$ is the closed subvariety consisting of $(x \in L)$ with $L$ is contained in $Y$.
$U$ and $U'$ are the open complements to $Z$ and $Z'$ respectively.

Note that $W$ is a $\P^d$-bundle over $Y$.
Furthermore $q': Z' \to F(Y)$ is a $\P^1$-bundle over $F(Y)$ and
similarly, $q: Z \to F(Y)$ is a $Sym^2(\P^1) = \P^2$-bundle over $F(Y)$.
Thus the Fiber Bundle Formula (\ref{proj-bun}) implies that 
\[\bal\;
[W] &= [\P^d][Y] \\
[Z] &= [\P^2][F(Y)] \\
[Z'] &= [\P^1][F(Y)]. \\
\eal\]

Putting everything together we obtain:
\[
[\Hilb{Y}] - [\P^2][F(Y)] = [\P^d][Y] -  [\P^1][F(Y)] 
\]
or equivalently
\[
[\Hilb{Y}] = [\P^d][Y] + \L^2[F(Y)].
\]
\end{proof}


\subsection{Examples and immediate applications}

\begin{corollary}\label{chiFY}
1) Let $Y$ be a cubic hypersurface over an arbitrary field. Then for the etale
Euler characteristic we have
\[
\chi(F(Y)) = \frac{\chi(Y) (\chi(Y) - 3)}{2} + \chi(Sing(Y)).
\]

2) Let $Y$ be a real cubic hypersurface. Then
\[
\chi_\R(F(Y)) = \left\{\begin{array}{ll}
\frac12(\chi_\R(Y)^2 + \chi_\C(Y)) - \chi_\R(Sing(Y)), & d \; odd \\
\\
\frac12\bigl(\chi_\R(Y)(\chi_\R(Y) - 4) + \chi_\C(Y)\bigr) + \chi_\R(Sing(Y)), & d \; even \\
\end{array}\right.
\]


3) Let $k = \F_q$, the finite field and let $N_1 = \#Y(\F_q)$, $N_2 = \#Y(\F_{q^2}), N_s = \#Sing(Y)(\F_q)$.
Then
\[
\#F(Y)(\F_q) = \frac{{N_1}^2 - 2(1+q^d)N_1 + N_2}{2q^d} + q^{d-2} N_s.
\]

\end{corollary}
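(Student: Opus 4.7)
The plan is to apply each of the three realization homomorphisms from Section \ref{sec-grothendieck} directly to the Sym-version \eqref{main-formula-sym} of the $Y$-$F(Y)$ relation, and use the expressions for the symmetric square under each realization (equations \eqref{numSym2}, \eqref{chiSym2}, \eqref{chiRSym2}) on the left-hand side. In every case this reduces the proof to a one-line algebraic manipulation in which one solves for the invariant of $F(Y)$. The inputs I need are $\chi(\L^n)=1$, $\chi_\R(\L^n)=(-1)^n$, and $\#\L^n(\F_q)=q^n$; the first and third are immediate, and the second follows from $\chi_c(\A^1_\R)=-1$.

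For part 1), applying $\chi$ to \eqref{main-formula-sym} and using \eqref{chiSym2} gives
\[
\tfrac12\chi(Y)(\chi(Y)+1) \;=\; 2\,\chi(Y) + \chi(F(Y)) - \chi(Sing(Y)),
\]
which rearranges to the stated formula. For part 3), applying the point-counting realization and \eqref{numSym2} yields
\[
\tfrac12\bigl(N_1^2 + N_2\bigr) \;=\; (1+q^d)N_1 + q^2\,\#F(Y)(\F_q) - q^d N_s,
\]
from which one solves for $\#F(Y)(\F_q)$.

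Part 2) is the only place where anything mildly interesting happens: since $\chi_\R(\L^d) = (-1)^d$, the coefficients of $[Y]$ and $[Sing(Y)]$ in \eqref{main-formula-sym} depend on the parity of $d$. When $d$ is odd, $1+(-1)^d = 0$ so the $(1+\L^d)[Y]$-term drops out while the singular term contributes with a plus sign; when $d$ is even, $(1+\L^d)$ contributes $2\chi_\R(Y)$ and the singular term appears with a minus sign. Combining with \eqref{chiRSym2} and rearranging produces the two displayed cases.

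No real obstacle arises. The only thing to check before carrying out the substitutions is that $\chi$, $\chi_\R$ and $\#(-)(\F_q)$ are ring homomorphisms on $K_0(Var/k)$ compatible with the symmetric-square formulas \eqref{numSym2}, \eqref{chiSym2}, \eqref{chiRSym2}, but this is precisely what has already been recorded in Section \ref{sec-grothendieck}, so each of the three parts reduces to bookkeeping.
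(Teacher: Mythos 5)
Your proposal is correct and is essentially identical to the paper's own proof, which likewise just applies the three realization homomorphisms to (\ref{main-formula-sym}) together with (\ref{numSym2}), (\ref{chiSym2}), (\ref{chiRSym2}) and the values $\chi(\L^k)=1$, $\chi_\R(\L^k)=(-1)^k$, $\#\L^k(\F_q)=q^k$; your parity discussion in part 2) is exactly the relevant bookkeeping. One small caveat: solving your (correct) point-counting identity gives the denominator $2q^2$, not the $2q^d$ displayed in the statement of part 3); the two agree only for $d=2$, so the printed formula has a typo and your derivation yields the correct version $\#F(Y)(\F_q)=\frac{N_1^2-2(1+q^d)N_1+N_2}{2q^2}+q^{d-2}N_s$.
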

\begin{proof}
The formulas follow by applying respective realization homomorphisms
from Section \ref{sec-grothendieck}
to the formula for $\Sym{Y}$ of Theorem \ref{main-thm1} and formulas (\ref{numSym2}), (\ref{chiSym2}), (\ref{chiRSym2})
using that $\#(\L^k) = q^k$, $\chi_\C(\L^k) = 1$, $\chi_\R(\L^k) = (-1)^k$.


\end{proof}



\begin{example}\label{ex-duVal}
Let $k$ be an algebraically closed field.
Let $Y/k$ be a cubic surface with $r$ isolated du Val singularities
with the sum of Milnor numbers of the singular points equal to $n$.

Let $\wt{Y}$ be the minimal desingularization of $Y$. We know that $\chi(\wt{Y}) = 9$
(for example because $\wt{Y}$ is a smooth rational surface
with $K_{\wt{Y}}^2 = K_Y^2 = 3$).
On the other hand we have
$
\chi(\wt{Y}) = \chi(Y) + n
$
so that
\[
\chi(Y) = 9 - n 
\]
and Corollary \ref{chiFY} (1) implies that
\be{linesSurf}
\#F(Y)(k) = \chi(F(Y)) = \frac{(9-n)(6-n)}{2} + r
\ee	

In particular, a smooth cubic surface has 27 lines and a cubic surface with an ordinary
double point has 21 lines.

The same formula (\ref{linesSurf}) has been obtained in \cite{BW}, page 255,
over the complex numbers using case by case analysis (see also \cite[Table 9.1]{Dol});
another proof is given in \cite[Satz 1.1]{KM}.
\end{example}

\begin{example}\label{ex-real}
The structure of real cubic surfaces and the number of lines on them is a classical
subject initiated by Schl\"afli \cite{Sch} in 1863. In particular Schl\"afli proves that
there can be $3$, $7$, $15$ or $27$ lines on a smooth real cubic surface.
See \cite{Seg42, KM,Sil,Kol-surf,PT} for a discussion on real cubic surfaces.

Recall that a smooth real cubic surface $Y$ is either isomorphic
to a blow of $\R\P^2$ in $k$ pairs of complex conjugate points and $6-2k$ real points,
where $k$ is $0, 1, 2$ or $3$ in which case $Y$ is a rational surface
or is an irrational surface with $Y(\R)$ homeomorphic to a disjoint union of $\R\P^2$ and a two-sphere $\S^2$.

If $Y$ is a smooth rational real cubic surface, then
\[
\chi_\R(Y) = \chi_\R(\R\P^2) + (6-2k) \chi_\R(\L) = 1 - (6-2k) = 2k - 5
\]
and Corollary \ref{chiFY} (2) implies that there are
\[
\# F(Y)(\R) = \chi_\R (F(Y)) = \frac{(2k-5)(2k-9)+9}{2} = 2k^2 - 14k + 27
= \left\{
\begin{array}{ll}
27, & k = 0 \\
15, & k = 1 \\
7, & k = 2 \\
3, & k = 3 \\
\end{array}\right.
\]
real lines on $Y$. 

In the case of irrational $Y$ we have 
\[
\chi_\R(Y) = \chi_\R(\R\P^2) + \chi_\R(\S^2) = 1 + 2 = 3
\]
and thus there are
\[
\# F(Y)(\R) = \chi_\R (F(Y)) = \frac{-3+9}{2} = 3
\]
real lines on $Y$.

Similarly one can deduce a more general formula for the number of
lines on a real cubic surface with du Val singularities which has been 
also computed in \cite[Satz 2.8]{KM}.
\end{example}

\begin{example}
Let  $Y$ be a cone over a $(d-1)$-dimensional cubic $\bar{Y}$.
Then we have
\[\bal\;
[Y] &= 1 + \L \cdot [\bar{Y}]\\ 
Sym^2 [Y] &= 1 + \L \cdot [\bar{Y}] + \L^2 \cdot Sym^2 [\bar{Y}]\\ 
[Sing(Y)] &= 1 + \L \cdot [Sing(\bar{Y})]\\ 
[F(Y)] &= [\bar{Y}] + \L^2 [F(\bar{Y})]\\ 
\eal\]
(for the last equality note that the set of lines on $Y$ that pass through the
vertex of the cone are parametrized by $\bar{Y}$, whereas the rest of the lines
project isomorphically onto a line on $\bar{Y}$ and the fiber of $Y$ over each
such line is a $2$-plane contained in $Y$).

In this case one can see that the formula of Theorem \ref{main-thm1} (2) for $\bar{Y}$ implies
the same kind of formula for $Y$.
\end{example}

\subsection{The relation in $K_0(Var/k)[\L^{-1}$]}



\begin{theorem}\label{main-thm2}
Let $Y$ be a cubic $d$-fold and let $\MM_Y$ be its rational defect (see Definition \ref{def-defect}).
There is the following relation in $K_0(Var/k)[\L^{-1}]$:
\[\;
[F(Y)] = Sym^2(\MM_Y + [\P^{d-2}]) - \L^{d-2}(1 - [Sing(Y)]). 
\]
\end{theorem}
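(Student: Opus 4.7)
The plan is to derive this from the symmetric-power version of the $Y$-$F(Y)$ relation, namely equation (\ref{main-formula-sym}) in Theorem \ref{main-thm1}, which gives
\[
\L^2[F(Y)] \= [\Sym{Y}] - (1+\L^d)[Y] + \L^d[Sing(Y)].
\]
Since we are working in $K_0(Var/k)[\L^{-1}]$, it suffices to prove the relation after multiplying both sides by $\L^2$; the goal then reduces to showing
\[
\L^2\,Sym^2(\MM_Y + [\P^{d-2}]) \= Sym^2([Y]) - (1+\L^d)[Y] + \L^d.
\]

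The key observation is a clean identity obtained by clearing denominators in $\MM_Y + [\P^{d-2}]$. Using the definition $\L\MM_Y = [Y] - [\P^d]$ and the obvious $\L[\P^{d-2}] = [\P^{d-1}] - 1 = [\P^d] - 1 - \L^d$, one gets
\[
\L\bigl(\MM_Y + [\P^{d-2}]\bigr) \= [Y] - 1 - \L^d.
\]
Now I would apply (\ref{symL}) in the form $\L^2 Sym^2(\alpha) = Sym^2(\L\alpha)$ to rewrite
\[
\L^2\,Sym^2(\MM_Y + [\P^{d-2}]) \= Sym^2\bigl([Y] - (1+\L^d)\bigr).
\]

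The remaining step is a routine expansion using (\ref{symsum}): writing $\gamma = 1+\L^d$ and $[Y] = ([Y]-\gamma)+\gamma$, one has $Sym^2([Y]) = Sym^2([Y]-\gamma) + ([Y]-\gamma)\gamma + Sym^2(\gamma)$, so that
\[
Sym^2([Y]-\gamma) \= Sym^2([Y]) - [Y]\gamma + \gamma^2 - Sym^2(\gamma).
\]
A direct computation gives $\gamma^2 - Sym^2(\gamma) = (1+2\L^d+\L^{2d}) - (1+\L^d+\L^{2d}) = \L^d$, using that $Sym^2(\L^d) = \L^{2d}$ by (\ref{symL}) and $Sym^2(1) = 1$. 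Combining the two displays yields precisely the desired identity. Subtracting this from (\ref{main-formula-sym}) gives $\L^2\bigl(Sym^2(\MM_Y+[\P^{d-2}]) - [F(Y)]\bigr) = \L^d(1-[Sing(Y)])$, and dividing by $\L^2$ in $K_0(Var/k)[\L^{-1}]$ finishes the proof.

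There is no real obstacle here; the calculation is purely formal once one notices the identity $\L(\MM_Y+[\P^{d-2}]) = [Y] - 1 - \L^d$, which is what makes the shape $\MM_Y + [\P^{d-2}]$ (rather than, say, $\MM_Y + [\P^{d-1}]$) the right combination to appear under the symmetric square.
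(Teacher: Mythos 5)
Your proof is correct and follows essentially the same route as the paper: both start from the relation $\L^2[F(Y)]=[\Sym{Y}]-(1+\L^d)[Y]+\L^d[Sing(Y)]$ of Theorem \ref{main-thm1}, expand the symmetric square formally via (\ref{symsum}) and (\ref{symL}), and divide by $\L^2$ in $K_0(Var/k)[\L^{-1}]$. Your identity $\L(\MM_Y+[\P^{d-2}])=[Y]-1-\L^d$ is just a tidy repackaging of the paper's auxiliary identity $Sym^2[\P^d]-(1+\L^d)[\P^d]=\L^2\,Sym^2([\P^{d-2}])-\L^d$, so the argument matches the paper's in substance.
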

\begin{proof}
We compute the symmetric square of
\[
[Y] = [\P^d] + \L \cdot \MM_Y \in K_0(Var/k)[\L^{-1}]
\]
using identities (\ref{symsum}), (\ref{symL}):
\[
[\Sym{Y}] = Sym^2[\P^d] + \L \cdot [\P^d]  \cdot \MM_Y + \L^2 \cdot Sym^2(\MM_Y).
\]

Substituting this into Theorem \ref{main-thm1}(2) gives:
\be{eqL2}\bal
\L^2 \cdot [F(Y)] &= [\Sym{Y}] - (1+ \L^d)[Y] + \L^d \cdot [Sing(Y)] = \\
&= \L^2 \cdot Sym^2(\MM_Y) + \L^2 \cdot [\P^{d-2}] \cdot \MM_Y + \\
&+ \Bigl(Sym^2[\P^d] -(1+\L^d)[\P^d]\Bigr)  + \L^d \cdot [Sing(Y)].\\
\eal\ee

Finally it is easy to see that in fact
\[
Sym^2[\P^d] -(1+\L^d)[\P^d] = \L^2 \cdot Sym^2([\P^{d-2}]) - \L^d 
\]
and we get the claim dividing (\ref{eqL2}) by $\L^2$.

\end{proof}

\begin{corollary}\label{cor-main-thm2}
There is the following relation in $K_0(Var/k)[\L^{-1}]$:
\[
[F(Y)] = Sym^2(\MM_Y) + [\P^{d-2}] \cdot \MM_Y + \sum_{k=0}^{2d-4} a_k \L^k + \L^{d-2} \cdot [Sing(Y)]
\]
where 
\[
a_k = \left\{
\begin{array}{cc}
\left[\frac{k+2}{2}\right], & k < d-2 \\
\left[\frac{d-2}{2}\right], & k = d-2 \\
\left[\frac{2d-2-k}{2}\right], & k > d-2 \\
\end{array}\right.
\]
\end{corollary}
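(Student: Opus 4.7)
The plan is to derive the corollary purely formally from Theorem \ref{main-thm2} by expanding the symmetric square $Sym^2(\MM_Y + [\P^{d-2}])$ using the additivity formula (\ref{symsum}). Applying it with $\alpha = \MM_Y$ and $\beta = [\P^{d-2}]$ yields
\[
Sym^2(\MM_Y + [\P^{d-2}]) = Sym^2(\MM_Y) + [\P^{d-2}] \cdot \MM_Y + Sym^2([\P^{d-2}]),
\]
and substituting this into the statement of Theorem \ref{main-thm2} reduces the claim to the purely numerical identity
\[
Sym^2([\P^{d-2}]) - \L^{d-2} = \sum_{k=0}^{2d-4} a_k \L^k.
\]

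Next I would compute $Sym^2([\P^{d-2}])$ explicitly. Writing $[\P^{d-2}] = \sum_{i=0}^{d-2} \L^i$ and applying (\ref{symsum}) iteratively, together with the formula $Sym^2(\L^i) = \L^{2i}$ coming from (\ref{symL}) (using $Sym^2(1) = 1$), gives
\[
Sym^2([\P^{d-2}]) = \sum_{i=0}^{d-2} \L^{2i} + \sum_{0 \le i < j \le d-2} \L^{i+j} = \sum_{k=0}^{2d-4} b_k \L^k,
\]
where $b_k$ is the number of pairs $(i,j)$ with $0 \le i \le j \le d-2$ and $i + j = k$.

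The final step is the elementary combinatorial count of $b_k$. For $k \le d-2$ the upper bound $j \le d-2$ is automatic, so $i$ ranges over $0, 1, \dots, \lfloor k/2 \rfloor$, yielding $b_k = \lfloor (k+2)/2 \rfloor$; for $k > d-2$ the lower bound $i \ge k - (d-2)$ becomes active, and a short parity case-split gives $b_k = \lfloor (2d-2-k)/2 \rfloor$. Subtracting $\L^{d-2}$ decreases the middle coefficient by one, converting $b_{d-2} = \lfloor (d-2)/2 \rfloor + 1$ into $a_{d-2} = \lfloor (d-2)/2 \rfloor$ while leaving the other coefficients unchanged, exactly matching the stated piecewise formula. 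No serious obstacle is anticipated: once Theorem \ref{main-thm2} is granted, the argument is entirely formal, with the only non-trivial ingredient being the bookkeeping in the combinatorial count.
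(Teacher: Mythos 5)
Your proposal is correct and follows essentially the route the paper intends: the corollary is stated as an immediate consequence of Theorem \ref{main-thm2}, obtained by expanding $Sym^2(\MM_Y + [\P^{d-2}])$ via (\ref{symsum}) and writing $Sym^2([\P^{d-2}]) = \sum_{0 \le i \le j \le d-2} \L^{i+j}$, whose coefficients minus the correction $\L^{d-2}$ give exactly the stated $a_k$. Your combinatorial bookkeeping (including the middle coefficient dropping from $\lfloor (d-2)/2 \rfloor + 1$ to $\lfloor (d-2)/2 \rfloor$) checks out, e.g. against the $d=3,4$ tables in Examples \ref{ex-F3} and \ref{ex-F4}.
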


\medskip

We illustrate how Theorem \ref{main-thm2} allows to compute the class $[F(Y)]$
in terms of the rational defect $\MM_Y$ in two examples of rational cubics.

\medskip

\begin{example}\label{ex-sing}

Let $Y$ is a cubic hypersurface with a single ordinary
double point $P$ over an arbitrary field.
Projecting from the point $P$ one establishes an isomorphism
\[
Bl_P(Y) \simeq Bl_V(\P^d),
\]
where $V \subset \P^d$ is a smooth complete
intersection of a cubic with a quadric.

We find the rational defect of $Y$.
Let $E$ be the exceptional divisor of $Bl_{P}(Y)$. We have:
\[
[Y] - [P] + [E] = [Bl_P(Y)] = [Bl_V(\P^d)] = [\P^d] + \L [V]
\]
so that
\[
[Y] = [\P^d] + \L [V] - ([E] - 1).
\]

The exceptional divisor $E$ is a $(d-1)$-dimensional quadric, so that
$[E] = [\P^{d-1}]$ for $d$ even and $[E] = [\P^{d-1}] + \L^\frac{d-1}{2}$ for $d$ odd.
This leads to the following formula:
\[
[Y] = \left\{\begin{array}{ll}
\, [\P^d] + \L ([V] - [\P^{d-2}]), & d \; \text{even} \\
\, [\P^d] + \L ([V] - [\P^{d-2}] - \L^\frac{d-3}{2}), & d \; \text{odd}  \\
\end{array}\right.
\]
and for the rational defect of $Y$ we get
\[
\MM_Y = \left\{\begin{array}{ll}
\, [V] - [\P^{d-2}], & d \; \text{even} \\
\, [V] - [\P^{d-2}] - \L^\frac{d-3}{2}, & d \; \text{odd}  \\
\end{array}\right.
\]

The two varieties $F(Y)$ and $\Sym{V}$ are 
known to be birational: $V$ parametrizes lines passing through $P$
and for two such lines there is a residual line in the plane spanned
by the two lines \cite{CG}. 
Now we can find the class of the Fano variety in $K_0(Var/k)[\L^{-1}]$ using Theorem \ref{main-thm2}:
\be{FYnode}
[F(Y)] = \left\{\begin{array}{ll}
\, Sym^2([V]), & d \; \text{even} \\
\, Sym^2([V] - \L^\frac{d-3}{2}), & d \; \text{odd}  \\
\end{array}\right.
\ee


\medskip

If $d=2$, and $k$ is algebraically closed, then $V$ consists of six isolated points, and there are $Sym^2(6) = 21$
lines on a cubic surface with one node in accordance with Example \ref{ex-duVal}.

If $d=3$, $C := V$ is a canonically embedded genus $4$ curve and we have
\[\bal\;
[F(Y)] &= Sym^2([C] - 1) = \\
&= Sym^2([C]) - Sym^2(1) - ([C] - 1) = \\
&= Sym^2([C])  -  [C].
\eal\]

It is known \cite{CG} that in this case the birational morphism $Sym^2(C) \to F(Y)$ 
glues two disjoint copies of $C$ together.

If $d=4$, $S := V$ is a $K3$-surface and
\[
[F(Y)] = [Sym^2(S)]
\]
which agrees with \cite[Lemma 6.3.1]{Has2}.

 
\end{example}

\begin{example}
Let $d$ be even and assume that $Y$ is a smooth cubic $d$-fold containing two disjoint $d/2$-planes $P_1, P_2$.
In this case $Y$ is rational as there is a birational map $P_1 \times P_2 \to Y$
mapping $(a,b) \in P_1 \times P_2$ to the third point of intersection
of the line $L_{a,b}$ through $a$ and $b$ with $Y$.

Resolving indeterminacy locus of this map and its inverse we find as isomorphism
\[
Bl_{P_1, P_2} (Y) \simeq Bl_{V} (P_1 \times P_2).
\]
Here $V$ is a $(d-2)$-dimensional variety consisting of points $(a,b)$ such that $L_{a,b} \subset V$.
In fact $V$ is a complete intersection of divisors $(1,2)$ and $(2,1)$ in $P_1 \times P_2$.

$V$ can also be considered as a subvariety in $F(Y)$ parametrizing lines intersecting both $P_1$ and $P_2$.

For the rational defect of $Y$ we have
\[\bal\;
\MM_Y &= \MM_{P_1 \times P_2} +  [V] - 2 [\P^{d/2}][\P^{d/2-2}]\\
\eal\]
unless $d=2$ in which case the third term disappears, $V$ is a set of $5$ points and $\MM_Y = 6$.
For $d=4$, $S:=V$ is a $K3$ surface and 
$\MM_Y = [S] + [\P^1]^2 - 2[\P^2]$.

In this example $F(Y)$ is again birationally equivalent to $\Sym{V}$: two lines $L_1$, $L_2$ on $Y$ intersecting
both $P_1$ and $P_2$ and which are generic with this property,
determine a smooth cubic surface $T$ in their span $\langle L_1, L_2 \rangle$.
$T$ is equipped with two more lines $E_1 = P_1 \cap T$, $E_2 = P_2 \cap T$.
There is a unique line $L$ on $T$ which does not intersect the quadrilateral formed by $L_1, L_2, E_1, E_2$.

One can see that the assignment $\{L_1, L_2\} \mapsto L$ defines a birational morphism $\Sym{V} \to F(Y)$.
For the inverse map, starting with a generic line $L \subset Y$ not intersecting $P_1$, $P_2$
one finds the unique $3$-plane containing $L$ and intersecting $P_1$ and $P_2$ in some
lines $E_1$, $E_2$.
The intersection of this $3$-plane with $Y$ is a smooth cubic surface $T$, and one finds
a unique pair of skew lines $L_1$, $L_2$ intersecting both $E_1$, $E_2$ and not intersecting $L$.

Theorem \ref{main-thm2} gives an expression of $[F(Y)]$ in terms of $[V]$, which will be of the form
\[
[F(Y)] = [\Hilb{V}] + \L \cdot (\dots).
\]
The term in brackets is a certain combination of classes $\L^i$ and $\L^j \cdot [V]$.
For $d=4$ this last term vanishes and we simply get
\[
[F(Y)] = [\Hilb{S}] \in K_0(Var/k)[\L^{-1}]. 
\]
However, as Hassett remarks in \cite{Has2} for $d=4$
these two varieties are not isomorphic 
(see \cite[Section 6.1]{Has3} for details).
\end{example}

\section{Hodge structure of the Fano variety $F(Y)$}
\label{sec-hodge}

In this section we assume $Y$ to be a smooth complex cubic $d$-fold.
We compute the Hodge structure of the Fano variety of lines $F(Y)$ in terms
of the Hodge structure of $Y$.

By the Weak Lefschetz theorem there is the following decomposition of Hodge structure of $Y$:
\[
H^*(Y, \Q) = \bigoplus_{k=0}^d \Q(-k) \oplus H^d(Y, \Q)^{prim},
\]
where $H^d(Y, \Q)^{prim}$ is the primitive cohomology with respect to the hyperplane section.
We have
\[
H^d(Y, \Q) = \left\{
\begin{array}{cc}
H^d(Y, \Q)^{prim}, & d \; odd \\
H^d(Y, \Q)^{prim} \oplus \Q(-\frac{d}{2}), & d \; even \\
\end{array}\right.
\]

The Hodge structure of $F(Y)$ is expressed in terms of 
the weight $(d-2)$ Hodge structure
\be{HHY}
\HH_Y := H^d(Y, \Q)^{prim}(1).
\ee

\begin{theorem}\label{thm-hodge} Let $Y$ be a smooth complex cubic hypersurface of dimension $d$.
There is the following decomposition for the Hodge structure of the Fano variety of lines on $Y$:
\[
H^*(F(Y), \Q) \simeq Sym^2(\HH_Y) \oplus \bigoplus_{k=0}^{d-2} \HH_Y(-k) \oplus \bigoplus_{k=0}^{2d-4} \Q(-k)^{a_k}
\]
and
\[
a_k = \left\{
\begin{array}{cc}
\left[\frac{k+2}{2}\right], & k < d-2 \\
\left[\frac{d-2}{2}\right], & k = d-2 \\
\left[\frac{2d-2-k}{2}\right], & k > d-2 \\
\end{array}\right.
\]

In particular, if $d$ is even, then all odd-dimensional cohomology of $F(Y)$ vanish.

\end{theorem}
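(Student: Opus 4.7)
The plan is to derive the theorem directly from Corollary \ref{cor-main-thm2} by applying the Hodge realization homomorphism $\mu_{Hdg}: K_0(Var/\C)[\L^{-1}] \to K_0(HS)$ (which extends to the localization since $\mu_{Hdg}(\L) = [\Q(-1)]$ is invertible). Since $Y$ is smooth, $Sing(Y) = \emptyset$ and the corollary reduces to
\[
[F(Y)] = Sym^2(\MM_Y) + [\P^{d-2}] \cdot \MM_Y + \sum_{k=0}^{2d-4} a_k \L^k
\]
in $K_0(Var/\C)[\L^{-1}]$.

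Next I would compute the Hodge realization of each term. By Example \ref{ex-hypers}, $\mu_{Hdg}(\MM_Y) = [\HH_Y]$. Also $\mu_{Hdg}(\L^k) = [\Q(-k)]$ and $\mu_{Hdg}([\P^{d-2}]) = \sum_{k=0}^{d-2} [\Q(-k)]$. Using compatibility of $\mu_{Hdg}$ with symmetric squares (noted at the end of Section \ref{sec-grothendieck}), the relation above transforms into the equality
\[
[H^*(F(Y), \Q)] = [Sym^2(\HH_Y)] + \sum_{k=0}^{d-2} [\HH_Y(-k)] + \sum_{k=0}^{2d-4} a_k [\Q(-k)]
\]
in $K_0(HS)$, with exactly the stated coefficients $a_k$.

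The step that requires some care is upgrading this equality of classes in $K_0(HS)$ to an isomorphism of Hodge structures. Since $Y$ is smooth, $F(Y)$ is smooth projective, so $H^*(F(Y), \Q)$ is a polarizable pure Hodge structure; each summand on the right is visibly polarizable as well (a Tate twist or symmetric square of the primitive cohomology of a smooth projective variety). The category of polarizable rational Hodge structures is semisimple, so $K_0(HS)$ is the free abelian group on isomorphism classes of simple objects, and equality of classes therefore forces an isomorphism of the Hodge structures themselves. This is the conceptual heart of the argument and the only nontrivial ingredient beyond the bookkeeping.

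Finally, for the vanishing of odd cohomology when $d$ is even, I would simply observe that every summand on the right hand side is pure of even weight: $Sym^2(\HH_Y)$ has weight $2(d-2)$, $\HH_Y(-k)$ has weight $d-2+2k$, and $\Q(-k)$ has weight $2k$. Since $H^i(F(Y), \Q)$ is pure of weight $i$, the pieces of odd degree must be zero. The main obstacle in the whole argument is really just the semisimplicity step; all other ingredients are routine once Corollary \ref{cor-main-thm2} is in hand.
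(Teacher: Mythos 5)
Your proposal is correct and follows essentially the same route as the paper: apply the Hodge realization $\mu_{Hdg}$ (extended to $K_0(Var/\C)[\L^{-1}]$ since $\mu_{Hdg}(\L)$ is invertible) to Corollary \ref{cor-main-thm2}, use Example \ref{ex-hypers} and compatibility with symmetric powers, and then upgrade the resulting equality in $K_0(HS)$ to an isomorphism via semisimplicity of polarizable pure Hodge structures. Your weight-parity argument for the vanishing of odd cohomology when $d$ is even is a correct justification of the ``in particular'' clause, which the paper leaves implicit.
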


Note that for $d \ge 3$ the first interesting (i.e. non-Tate) cohomology group of $F(Y)$ is
\[
H^{d-2}(F(Y), \Q) = \left\{
\begin{array}{cc}
\HH_Y, & d \; odd \\
\HH_Y \oplus \Q(-\frac{d-2}{2})^{[\frac{d+2}{4}]}, & d \; even \\
\end{array}\right.
\]

\begin{proof}
Consider the Hodge realization homomorphism 
\[
\mu_{Hdg}: K_0(Var/\C) \to K_0(HS/\Q).
\]
$\mu_{Hdg}$ maps the Tate class $\L^p=[\A^p]$ to the class of the Hodge-Tate structure $[\Q(-p)]$ of weight $2p$,
which is invertible;
this implies that $\mu_{Hdg}$ descends to a well-defined ring homomorphism 
\[
K_0(Var/\C)[\L^{-1}] \to K_0(HS)
\]
which we will also denote by $\mu_{Hdg}$.

\medskip

Definition \ref{def-defect} of the rational defect $\MM_Y$ is compatible with the definition of $\HH_Y$:
\[
[\HH_Y] = \mu_{Hdg}(\MM_Y) \in K_0(HS),
\]
see Example \ref{ex-hypers}.

Applying the realization $\mu_{Hdg}$ to the decomposition of Corollary \ref{cor-main-thm2}
we get
\[
[H^*(F(Y), \Q)] = [Sym^2(\HH_Y)] + \bigl[\bigoplus_{k=0}^{d-2} \HH_Y(-k)\bigr] + [\bigoplus_{k=0}^{2d-4} \Q(-k)^{a_k}]. 
\]

It is well-known that the category of polarizable Hodge structures is semisimple \cite[Corollary 2.12]{PS},
in particular if two polarizable pure Hodge structures $H_1$ and $H_2$ have equal
classes in the Grothendieck ring, then $H_1$ and $H_2$ are isomorphic. Thus we obtain
\[
H^*(F(Y), \Q) \simeq Sym^2(\HH_Y) \oplus \bigoplus_{k=0}^{d-2} \HH_Y(-k) \oplus \bigoplus_{k=0}^{2d-4} \Q(-k)^{a_k}. 
\]

\end{proof}

In principle Theorem \ref{thm-hodge} allows to compute all the Hodge numbers of the Fano variety
$F(Y)$ of a smooth
cubic $d$-fold using the following Lemma:

\begin{lemma}
The primitive Hodge numbers $h^{d-q,q}$ of a smooth complex cubic $d$-fold $Y$ are contained
in the range $\frac{d-1}{3} \le q \le \frac{2d+1}{3}$ and for those $q$ are 
given as follows:
\[
h^{d-q,q}_{prim}(Y) = \binom{d+2}{3q-d+1}. 
\]


\end{lemma}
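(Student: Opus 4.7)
The plan is to invoke Griffiths' description of the primitive cohomology of a smooth hypersurface in terms of its Jacobian ring, and then compute the relevant graded dimensions. Let $G \in \C[x_0,\ldots,x_{d+1}]$ be a defining cubic for $Y$, and let $J = (\partial G/\partial x_0,\ldots,\partial G/\partial x_{d+1})$ be the Jacobian ideal. Since $Y$ is smooth, the partial derivatives form a regular sequence, so the Jacobian ring $R = \C[x_0,\ldots,x_{d+1}]/J$ is a graded Artinian complete intersection.

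First, I would recall Griffiths' residue isomorphism: for a smooth hypersurface of degree $e$ in $\P^{n}$ one has
\[
H^{n-1-q,q}_{prim}(Y) \;\cong\; R^{(q+1)e - n - 1}, \qquad 0 \le q \le n-1.
\]
Specializing to $n = d+1$ and $e = 3$ gives $H^{d-q,q}_{prim}(Y) \cong R^{3q-d+1}$, so the question reduces to computing $\dim_\C R^k$.

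Second, I would compute the Hilbert series of $R$ from the Koszul resolution of the regular sequence of partials, each of degree $e-1 = 2$ in $d+2$ variables:
\[
\mathrm{Hilb}(R,t) \;=\; \prod_{i=0}^{d+1}\frac{1-t^{e-1}}{1-t} \;=\; \prod_{i=0}^{d+1}(1+t) \;=\; (1+t)^{d+2}.
\]
Hence $\dim_\C R^k = \binom{d+2}{k}$, and we obtain
\[
h^{d-q,q}_{prim}(Y) \;=\; \binom{d+2}{3q-d+1}.
\]

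Finally, the stated range of $q$ follows from the fact that $\binom{d+2}{3q-d+1}$ is nonzero precisely when $0 \le 3q-d+1 \le d+2$, i.e. $\tfrac{d-1}{3} \le q \le \tfrac{2d+1}{3}$. There is no serious obstacle here; the entire content is Griffiths' theorem plus a standard Koszul computation of the Hilbert function of a graded complete intersection, and the rest is bookkeeping.
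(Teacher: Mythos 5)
Your proposal is correct and follows exactly the route the paper intends: the paper's proof is simply the remark that this is ``a standard computation of Hodge numbers of a smooth hypersurface based on the work of Griffiths,'' and you have carried that out via the Griffiths residue isomorphism $H^{d-q,q}_{prim}(Y)\cong R^{3q-d+1}$ together with the Koszul computation $\mathrm{Hilb}(R,t)=(1+t)^{d+2}$. The degree bookkeeping and the resulting range $\tfrac{d-1}{3}\le q\le \tfrac{2d+1}{3}$ are all accurate (and consistent with the paper's examples for $d=3,4$), so nothing is missing.
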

\begin{proof}
The proof is a standard computation of Hodge numbers of a smooth hypersurface based
on the work of Griffiths \cite{Grif}.
\end{proof}

\begin{example}\label{ex-F3}
If $d = 3$, $H^3(Y,\Q)$ has weight $3$ and Hodge numbers $(0,5,5,0)$;
thus $\HH_Y$ has weight one with $h^{1,0} = h^{0,1} = 5$
and Theorem \ref{thm-hodge} gives a decomposition
of the Hodge structure of the Fano surface:
%
\[\left[
\begin{array}{c|ccccc|c}
H^4 & & & 1 & &       &   \Q(-2) \\
H^3 & & 5 & & 5 &     &   \HH_Y(-1) \\
H^2 & 10 && 25 && 10    &   Sym^2(\HH_Y) \\
H^1 & & 5 & & 5 &     &   \HH_Y \\
H^0 & & & 1 & &       &   \Q \\
\end{array}\right]
\]

This result has been known since the work of Clemens and Griffiths \cite{CG}.
\end{example}

\begin{example}\label{ex-F4}
If $d = 4$, $H^4(Y,\Q)$ has weight four and Hodge numbers $(0,1,21,1,0)$,
$\HH_Y$ has weight two with $h^{2,0} = h^{0,2} = 1$ and $h^{1,1} = 20$ 
(primitive classes are of codimension one in $H^{2,2}(Y)$)
and we get a decomposition for the Hodge structure of the
hyperk\"ahler fourfold $F(Y)$:
\[\left[
\begin{array}{c|ccccc|c}
H^8 & & & 1 & &       &   \Q(-4) \\
H^6 & & 1 & 21 & 1 &     &   \HH_Y(-2) \oplus \Q(-3) \\
H^4 & 1 & 21 & 232 & 21 & 1    &   Sym^2(\HH_Y) \oplus \HH_Y(-1) \oplus \Q(-2) \\
H^2 & & 1 & 21 & 1 &     &   \HH_Y \oplus \Q(-1) \\
H^0 & & & 1 & &       &   \Q \\
\end{array}\right]
\]

This has been deduced by Beauville and Donagi \cite[Proposition 2]{BD}
from the fact that $F(Y)$ is deformation equivalent to the Hilbert scheme
of two points on a $K3$ surface.
\end{example}

\section{Rational cubic hypersurfaces}
\label{sec-rational}

In this section $k$ is a field of characteristic zero.
In addition we assume that Conjecture \ref{conj-L} is true for $k$.

\begin{theorem} \label{thm-rat} 
Let $Y$ be a smooth cubic hypersurface of dimension $d \ge 3$
over a field satisfying Conjecture \ref{conj-L}.
If $Y$ is rational, then $F(Y)$ is stably decomposable in the sense of Definition \ref{def-decomp}.
\end{theorem}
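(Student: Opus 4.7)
The plan is to combine the $Y$-$F(Y)$ relation of Theorem \ref{main-thm2} with the Larsen–Lunts theorem \ref{thmLL}. Since $Y$ is smooth, $[Sing(Y)] = 0$, so Theorem \ref{main-thm2} reads
\[
[F(Y)] \= Sym^2\bigl(\MM_Y + [\P^{d-2}]\bigr) - \L^{d-2}
\]
in $K_0(Var/k)[\L^{-1}]$. Since $Y$ is rational, Corollary \ref{cor-ratM} promotes $\MM_Y$ to an honest element of $K_0(Var/k)$, so every term in the identity already lies in $K_0(Var/k)$, and Conjecture \ref{conj-L} makes the localization map injective; hence the identity holds in $K_0(Var/k)$ itself.

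I would then push the identity into $K_0(Var/k)/(\L)$. For $d \ge 3$ we have $[\P^{d-2}] \equiv 1$ and $\L^{d-2} \equiv 0 \pmod{\L}$, and a short check using $Sym^2(\alpha + \L\gamma) = Sym^2(\alpha) + \L\alpha\gamma + \L^2 Sym^2(\gamma)$ shows that $Sym^2$ descends to the quotient. Expanding via $Sym^2(A+B) = Sym^2 A + AB + Sym^2 B$ gives
\[
[F(Y)] \equiv Sym^2(\MM_Y) + \MM_Y + 1 \pmod{\L}.
\]
By Corollary \ref{cor-ratM}, modulo $\L$ we may write $\MM_Y \equiv \sum_j n_j [Y_j]$ with the $Y_j$ smooth projective of dimension $d-2$. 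Iterating the symmetric-square expansion (using $Sym^2(m[Z]) = m[\Sym{Z}] + \binom{m}{2}[Z \times Z]$ for $m \ge 0$ and $Sym^2(-\alpha) = \alpha^2 - Sym^2(\alpha)$ to accommodate negative coefficients) realises $Sym^2(\MM_Y)$, modulo $\L$, as an explicit $\Z$-linear combination of classes $[\Sym{Y_j}]$, $[Y_j \times Y_j]$ and $[Y_i \times Y_j]$; moreover $[\Sym{Y_j}] \equiv [\Hilb{Y_j}] \pmod{\L}$ by Corollary \ref{cor-HilbX}, so every class appearing represents a smooth projective variety.

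To finish, I would apply Theorem \ref{thmLL}: the stable birational class of $F(Y)$ must coincide with one of the summands that survives on the right, namely a point, some $Y_j$, a product $Y_i \times Y_j$ or $Y_j \times Y_j$, or a Hilbert scheme $\Hilb{Y_j}$. In each case, multiplying by an appropriate $\P^n$ (harmless under $SB$-equivalence) realises $F(Y)$ as stably birational to either $V \times V'$ or $\Hilb{V}$ with $\dim V = \dim V' = d-2$; for instance $F(Y) \sim_{SB} \operatorname{pt}$ yields $F(Y) \sim_{SB} \P^{d-2} \times \P^{d-2}$, $F(Y) \sim_{SB} Y_j$ yields $F(Y) \sim_{SB} Y_j \times \P^{d-2}$, and the remaining possibilities are already of the right shape. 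Thus $F(Y)$ is stably decomposable in the sense of Definition \ref{def-decomp}. The main obstacle is the careful bookkeeping in the symmetric-square expansion modulo $\L$ with possibly negative coefficients, and the verification that every SB-class that can appear is of one of the two shapes (product or Hilbert scheme) demanded by Definition \ref{def-decomp}; once this is in place, Larsen–Lunts closes the argument.
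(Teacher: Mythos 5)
Your proposal is correct and follows essentially the same route as the paper: rationality gives an integral $\MM_Y$ via Corollary \ref{cor-ratM}, the Cancellation Conjecture promotes Theorem \ref{main-thm2} to an identity in $K_0(Var/k)$, one expands $Sym^2$ modulo $\L$, replaces $\Sym{V}$ by $\Hilb{V}$ using Corollary \ref{cor-HilbX}, and concludes with the Larsen--Lunts theorem. The only cosmetic difference is that the paper keeps $\P^{d-2}$ as an extra smooth projective summand $V_{m+1}$ so that every surviving term is already a product or a Hilbert scheme, whereas you reduce $[\P^{d-2}]\equiv 1$ and repair the degenerate cases (a point or a single $Y_j$) by multiplying with projective spaces, which is harmless for stable birational equivalence.
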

\begin{proof}
By Corollary \ref{cor-ratM} we have
\[
[Y] = [\P^d] + \L \cdot \MM_Y,
\]
where $\MM_Y \in K_0(Var/k)$ is a combination
of classes of smooth projective varieties of dimension equal to $d-2$:
\be{def-rat-cubic}
\MM_Y = \sum_{i=1}^{m} [V_i] - \sum_{j=1}^{n} [W_j].
\ee

Since we assume that $\L$ is not a zero-divisor,
the formula in Theorem \ref{main-thm2} is valid in $K_0(Var/k)$:
\[
[F(Y)] = Sym^2(\MM_Y + \P^{d-2}) - \L^{d-2}.
\]

We set $V_{m+1} := \P^{d-2}$ and compute using (\ref{symsum}):
\[\bal\;
Sym^2(\MM_Y + \P^{d-2}) &= Sym^2 \Bigl( \sum_{i=1}^{m+1} [V_i] - \sum_{j=1}^{n} [W_j] \Bigr) = \\
&= Sym^2 \Bigl( \sum_{i=1}^{m+1} [V_i] \Bigr) - Sym^2 \Bigl( \sum_{j=1}^{n} [W_j] \Bigr)- \\
&- \Bigl(  \sum_{i=1}^{m+1} [V_i] - \sum_{j=1}^{n} [W_j] \Bigr)  \Bigl(  \sum_{j=1}^{n} [W_j] \Bigr)
 = \\
&= \sum_{i=1}^{m+1} Sym^2 [V_i] + \sum_{1 \le i < i' \le m+1} [V_i][V_{i'}] + \sum_{1 \le j \le j' \le n} [W_j][W_{j'}] - \\
&-\Bigl( \sum_{j=1}^{n} Sym^2 [W_j] 
+ \sum_{i=1}^{m+1} \sum_{j=1}^{n} [V_i][W_j] \Bigr).\\
\eal\]

By Corollary \ref{cor-HilbX}, $\Sym{V}$ and $\Hilb{V}$ are congruent modulo $\L$.
Thus reducing modulo $\L$ we obtain
\be{FYdecomp}\bal\;
[F(Y)] &\equiv \sum_{i=1}^{m+1} [\Hilb{V_i}] + \sum_{1 \le i < i' \le m+1} [V_i][V_{i'}] + \sum_{1 \le j \le j' \le n} [W_j][W_{j'}] - \\
&-\Bigl( \sum_{j =1}^n [\Hilb{W_j}] +  \sum_{i=1}^{m+1} \sum_{j=1}^{n} [V_i][W_j] \Bigr) \; (mod \; \L) \\
\eal\ee

The result now follows from Theorem \ref{thmLL}.
\end{proof}

\begin{remark}
It can be seen from the proof of Theorem \ref{main-thm2}, that for Theorem \ref{thm-rat} 
to hold in dimension $d$, instead of relying on the general Cancellation Conjecture \ref{conj-L}
it is sufficient to assume that $\L^2$ does not annihilate combinations of classes
of varieties of dimension $\le 2(d-2)$.

It would even suffice if we knew for 
combinations $\alpha \in K_0(Var/k)$
of classes of varieties of dimension $\le 2(d-2)$:
\[
\L^2 \cdot \alpha = 0 \implies \alpha \in \L\cdot K_0(Var/k).
\]

\end{remark}

\begin{remark}
If $k$ is algebraically closed, then for the rational defect (\ref{def-rat-cubic})
we have $n=m$ as 
\[
1 = h^{1,1}(Y) = h^{1,1}(\P^d) + h^{0,0}(\MM_Y) = 1 + m-n.
\]

This makes the number of the product terms in the right-hand-side of
(\ref{FYdecomp}) balanced,
and leaves an extra $\Hilb{V_i}$ term.
It is then very likely that in the decomposition (\ref{FYdecomp}) the class of the Fano variety will in fact match one of the $\Hilb{V_i}$,
and not one of the products $V_i \times V_{i'}$, $W_j \times W_{j'}$. 
\end{remark}

\medskip

Using the results on indecomposability of the Fano variety obtained in Sections \ref{subsec-decomp}, \ref{subsec-decomp4}
we can make Theorem \ref{thm-rat} very useful in dimensions $d=3,4$.

\begin{theorem}\label{thm-rat3}
Let $k$ be a field satisfying Conjecture \ref{conj-L}.
Any smooth cubic threefold $Y/k$ is irrational.
\end{theorem}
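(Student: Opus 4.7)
The plan is to combine two results already established in the excerpt: the rationality criterion of Theorem \ref{thm-rat}, which says that rationality of a smooth cubic $d$-fold forces $F(Y)$ to be stably decomposable (under Conjecture \ref{conj-L}), and the indecomposability result Proposition \ref{prop-indec3}, which asserts that the Fano surface of a smooth cubic threefold is never stably decomposable.

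More precisely, I would argue by contradiction. Suppose $Y$ is a smooth rational cubic threefold over a field $k$ satisfying Conjecture \ref{conj-L}. Applying Theorem \ref{thm-rat} with $d=3$ gives that $F(Y)$ is stably decomposable in the sense of Definition \ref{def-decomp}; that is, $F(Y)$ is stably birationally equivalent either to a product $C \times C'$ of two curves or to $\Sym{C}$ for some smooth projective curve $C$. Since $F(Y)$ is a surface of general type (by the canonical class formula \eqref{can-FY} with $d=3$), in particular it is not uniruled, so by Lemma \ref{lemma-mrc} stable birational equivalence coincides with birational equivalence. Thus $F(Y)$ is actually birational to $C \times C'$ or to $\Sym{C}$. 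But this contradicts Proposition \ref{prop-indec3}, which was established using the $\Psi$-polynomial computation $\Psi_{F(Y)}(t) = 1 + 5t + 10t^2$ and a comparison of $h^{1,1}$ between $F(Y)$ and a symmetric square of a curve. This contradiction proves the theorem.

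Since all the substantive work has been carried out in the preceding sections, there is essentially no remaining obstacle: the proof is a one-line combination of Theorem \ref{thm-rat} and Proposition \ref{prop-indec3}. The main conceptual point worth stressing, however, is that the argument bypasses the intermediate Jacobian entirely; whereas Clemens--Griffiths extract indecomposability from the uniqueness of the theta-divisor decomposition of a principally polarized abelian variety, here the corresponding uniqueness is provided by the Larsen--Lunts Theorem \ref{thmLL}, and indecomposability of $F(Y)$ is verified directly at the level of Hodge numbers via the $\Psi$-polynomial. It may also be worth remarking that we do not need to invoke the full force of Theorem \ref{thm-rat} for an arbitrary ground field, since by the Lefschetz principle and base change to $\overline{k}$ (and then to $\C$) one could reduce to the complex case; but the argument as written works uniformly for any $k$ of characteristic zero once Conjecture \ref{conj-L} is assumed.
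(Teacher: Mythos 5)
Your proof is correct and is essentially identical to the paper's own argument: the paper also deduces Theorem \ref{thm-rat3} directly by combining Theorem \ref{thm-rat} (rationality forces $F(Y)$ to be stably decomposable) with Proposition \ref{prop-indec3} (the Fano surface of a smooth cubic threefold is not stably decomposable). The extra detour through Lemma \ref{lemma-mrc} is harmless but unnecessary, since Proposition \ref{prop-indec3} already rules out stable decomposability.
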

\begin{proof}
Follows from Theorem \ref{thm-rat} and Proposition \ref{prop-indec3}.
\end{proof}

\begin{theorem}\label{thm-rat4}
1) Let $Y/k$ be a smooth cubic fourfold over a field $k$ satisfying Conjecture \ref{conj-L}.
If $Y$ is rational, then the Fano variety $F(Y)$ is birational to $\Hilb{S}$ for a $K3$ surface $S/k$.

2) If Conjecture \ref{conj-L} is true for $k=\C$, then
a very general\footnote{``Very general" means: lying in the complement of a 
countable union of divisors in the moduli space.} smooth complex cubic fourfold is irrational.
\end{theorem}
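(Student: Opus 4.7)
The plan is to reduce both parts of the theorem to results already established earlier in the paper, via the chain: stable decomposability of $F(Y)$ (from Theorem \ref{thm-rat}) gives honest birational decomposability (via Lemma \ref{lemma-mrc}), which then forces birationality to $\Hilb{S}$ for a $K3$ surface $S$ (Proposition \ref{prop-indec4}).

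For part (1), given a smooth rational cubic fourfold $Y/k$ and Conjecture \ref{conj-L} for $k$, Theorem \ref{thm-rat} directly yields that $F(Y)$ is stably decomposable. To upgrade this to birational decomposability I would invoke the canonical bundle formula (\ref{can-FY}) in dimension $d=4$, which gives $\omega_{F(Y)}=\OO_{F(Y)}$; in particular $F(Y)$ has non-negative Kodaira dimension and hence is not uniruled, so Lemma \ref{lemma-mrc} says that stable decomposability and decomposability coincide in this setting. Then Proposition \ref{prop-indec4} (which uses the $\Psi$-polynomial computation $\Psi_{F(Y)}(t)=1+t^2+t^4$ together with the Enriques--Kodaira classification to rule out decomposition as a product of surfaces and to pin down the surface in the symmetric-square case) identifies the decomposition of $F(Y)$ as a birational equivalence with $\Hilb{S}$ for a $K3$ surface $S/k$, which is the assertion of (1).

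For part (2), I would argue by contradiction over $k=\C$. Suppose that a very general smooth complex cubic fourfold $Y$ is rational. Part (1) then produces a $K3$ surface $S$ with $F(Y)$ birational to $\Hilb{S}$, so $F(Y)$ is decomposable in the sense of Definition \ref{def-decomp}. But Corollary \ref{cor-indec4} (derived from Hassett's classification of special cubic fourfolds via Proposition \ref{prop-spec4}) asserts precisely that $F(Y)$ is \emph{not} decomposable for $Y$ lying outside a countable union of divisors in the moduli space. This contradiction establishes irrationality of $Y$ on the complement of such a countable union of divisors. The whole argument is essentially bookkeeping: the substantive obstacle, namely Conjecture \ref{conj-L}, is packaged into the hypothesis and not attacked here, and the two non-trivial pieces — Proposition \ref{prop-indec4} and Corollary \ref{cor-indec4} — are both already in hand.
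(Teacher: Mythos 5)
Your proposal is correct and follows essentially the same route as the paper, whose proof of this theorem is precisely the one-line deduction from Theorem \ref{thm-rat}, Proposition \ref{prop-indec4} and Corollary \ref{cor-indec4} (with the passage from stable decomposability to decomposability via Lemma \ref{lemma-mrc} and $\omega_{F(Y)}=\OO_{F(Y)}$ already built into the discussion preceding Proposition \ref{prop-indec4}). Nothing further is needed.
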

\begin{proof}
Follows from Theorem \ref{thm-rat}, Proposition \ref{prop-indec4} and Colorollary \ref{cor-indec4}.
\end{proof}

\providecommand{\arxiv}[1]{\href{http://arxiv.org/abs/#1}{\tt arXiv:#1}}


\address{
{\bf Sergey Galkin}\\
National Research University Higher School of Economics (HSE)\\
Faculty of Mathematics and Laboratory of Algebraic Geometry\\
7 Vavilova Str. \\
117312, Moscow, Russia\\
e-mail: {\tt Sergey.Galkin@phystech.edu}
}

\medskip

\address{
{\bf Evgeny Shinder}\\
College of Mathematics \\
University of Edinburgh \\
The Kings Buildings \\
Mayfield Road \\
Edinburgh EH6 3JL, UK\\
e-mail: {\tt E.Shinder@ed.ac.uk}
}

\end{document}